\documentclass[11pt]{amsart}
\usepackage{amsmath,amssymb,amsthm}
\usepackage[margin=1in]{geometry}
\usepackage{booktabs}
\usepackage{hyperref}

\theoremstyle{plain}
\newtheorem{theorem}{Theorem}[section]
\newtheorem{lemma}[theorem]{Lemma}
\newtheorem{proposition}[theorem]{Proposition}
\newtheorem{corollary}[theorem]{Corollary}
\newtheorem{conjecture}[theorem]{Conjecture}
\theoremstyle{definition}
\newtheorem{question}[theorem]{Question}
\newtheorem{remark}[theorem]{Remark}

\renewcommand{\P}{P}
\newcommand{\K}{K}
\newcommand{\Z}{\mathbb{Z}}
\newcommand{\R}{\mathbb{R}}
\newcommand{\id}{e}
\newcommand{\pp}{\pi}
\newcommand{\half}{\tfrac12}
\newcommand{\Ball}[1]{B(#1)}
\newcommand{\upr}{\mathrm{up}}

\title[Least non-unique-product set sizes]{Least sizes of non-unique-product sets:
the Promislow group and a Heisenberg-type candidate}
\author{Moe Tabei}
\address{Independent researcher, Japan}
\email{tabei@ryun.jp}
\date{\today}
\subjclass[2020]{Primary 16S34, 20C07; Secondary 20F60, 68R05}
\keywords{unique product property, Kaplansky zero-divisor and unit conjectures,
Promislow group, Hantzsche--Wendt group, non-unique-product set, constraint
satisfaction}

\begin{document}
\begin{abstract}
Let $\P$ be the Promislow group, the orientable Hantzsche--Wendt Bieberbach group
of dimension $3$, which underlies Promislow's classical non-unique-product set and
Gardam's disproof of the unit conjecture. A finite set $A$ in a group is
\emph{non-UP} if $A\cdot A$ contains no uniquely represented element; such sets are
the combinatorial obstruction in Kaplansky's zero-divisor and unit problems. We
make a focused, fully verified computational and structural study of non-UP sets
\emph{inside $\P$}. Working in an exact integer model of $\P$, we (i) exhibit an
explicit non-UP set of $14$ elements of minimal word-radius $3$ and record its
complete coincidence pattern ($196$ cells, $71$ equal-product classes, every class
of size $\ge 2$); (ii) prove, by an exact constraint solver run to a proof of
infeasibility, that for the standard generating set the minimum size of a non-UP
subset of the radius-$r$ ball is exactly $14$ for $3\le r\le 6$ -- in particular no
non-UP set of size $8,\dots,13$ occurs within radius $6$; and (iii) isolate the
structural reasons -- a partial-permutation property of coincidence classes, the
unique-product property of single point-group fibers, and a cocycle obstruction --
that explain why these ball-limited bounds cannot be promoted to a bound valid in
all of $\P$ by ordering arguments alone. Finally, exploiting that the point matrices
are diagonal -- so the realization equations decouple coordinatewise and $\P$ embeds
in $D_\infty^{\,3}$ -- we prove an \emph{effective} finite-diameter principle: if a
non-UP $n$-set exists at all, one exists in the ball of explicit radius
$D(n)\le 4^{\,n}\,\mathrm{poly}(n)$, so the question of $\P$'s minimum is effectively
decidable. The realization lattices are in fact generated by unit vectors, and every
coincidence pattern compresses into a unit coordinate box (verified across $7000$
systems), so the pattern barely constrains the diameter and the only real
constraint is element distinctness; we are led to conjecture $D(n)=O(n^{1/3})$, under
which our radius-$6$ computation would already prove that Promislow's $14$ is the
minimum non-UP cardinality in $\P$. Whether $14$ is in fact this minimum remains
open. We also compute the companion \emph{two-sided} minimum, the least $|A|+|B|$
with $A\cdot B$ non-UP: within radius $3$ it equals $24$ (attained by $|A|=|B|=12$,
$A\ne B$), so it lies in $[16,24]$, the lower bound being the theorem of
Nielsen--Soelberg. As a companion case we treat the Fibonacci group $H_4=F(3,4)$,
proved by Dietrich--Lee--Nies--Vinyals to fail the unique product property via a
two-sided witness with $|A|+|B|=56$: we show $H_4$ fails the UPP \emph{symmetrically}
as well, with least symmetric size exactly $16$ over the radius-$4$ ball (the witness
has word-radius $3$), while its two-sided minimum over the radius-$3$ ball is exactly
$22$. Over the stated balls the two invariants thus order the two groups
\emph{oppositely}: $14<16$ symmetrically, $24>22$ two-sidedly. Two finer exact
curves sharpen the contrast within the radius-$3$ balls: the \emph{profile}
$\beta(m)$ (least $|B|$ against $|A|=m$) shows that $\P$ forces balance --- no
two-sided witness has a side smaller than $12$ --- while $H_4$ admits the lopsided
optimum $(8,14)$ but has holes at $|A|=9,11$; and the \emph{staircase} $u(n)$
(least number of unique products over $n$-sets) is non-monotone for $\P$, rising to
$4$ at $n=10,11$ before collapsing to $0$ at $14$, yet flat at $2$ for $H_4$ until
its collapse at $16$ --- in both groups the value $1$ never occurs, so the
two-unique-products and unique-product properties fail simultaneously. Both minimal
censuses are exact: each group has exactly $16$ minimal symmetric witnesses in its
radius-$3$ ball, a numerical coincidence we cannot explain. We also
record that the symmetric non-UP property is \emph{not} translation invariant
(every nontrivial translate of the $14$-set in its ball acquires unique products),
which corrects a tempting normalization and explains why ball searches cannot be
recentred. We emphasize that the
constraint-solver methodology is not new -- it is already present in Gardam's work
-- and that our contribution is the $\P$-internal data and structure.
\end{abstract}
\maketitle

\section{Introduction}

Kaplansky's zero-divisor conjecture asserts that the group ring $\mathbb{F}[G]$ of a
torsion-free group $G$ over a field $\mathbb{F}$ has no zero divisors; the unit
conjecture, that it has no units other than the obvious $\lambda g$. The unit
conjecture was disproved by Gardam \cite{Gardam2021} in characteristic $2$, with
the group taken to be the Promislow group $\P$; the zero-divisor and idempotent
conjectures remain open. The combinatorial heart of these problems is the
\emph{unique product property}. A nonempty finite subset $A$ of a group has a
\emph{unique product} if some $w\in A\cdot A=\{ab:a,b\in A\}$ is equal to $ab$ for
exactly one ordered pair $(a,b)\in A\times A$; a group is a \emph{unique product
group} if every nonempty finite $A$ has one. If $\mathbb{F}[G]$ has a nonzero zero
divisor (or a nontrivial unit), then the support of one of the factors fails to have
a unique product. We call a finite set $A$ \emph{non-UP} if $A\cdot A$ has no
uniquely represented element.

Promislow \cite{Promislow1988} exhibited, by computer search, a $14$-element non-UP
set in $\P$, an early and especially simple example in a torsion-free group; Carter
\cite{Carter2014}
produced infinite families of torsion-free non-UP groups and arbitrarily large
non-UP sets. The systematic study of \emph{small} non-UP sets is due to Soelberg
\cite{Soelberg2018} and Nielsen--Soelberg \cite{NielsenSoelberg2025}, who proved
that over \emph{all} torsion-free groups the minimum non-UP cardinality is exactly
$8$, with the bound achieved in a virtually-Heisenberg polycyclic group -- not in
$\P$ -- together with sharp non-existence bounds for small sizes. Their method is
presentation-based and ambient-group-agnostic. On the computational side, Gardam's
disproof \cite{Gardam2021,GardamSMRI2021} encodes the unit equation $\alpha\beta=1$
as a Boolean satisfiability instance over a ball of $\P$, and -- separately -- uses
a cardinality-constraint SAT encoding of the \emph{failure} of the unique product
property to produce a new non-UP set in an $\tilde A_2$-lattice group. Most directly,
Dietrich, Lee, Nies and Vinyals \cite{DLNV2026} report computational experiments on
the unique product and trivial-unit properties for the Hantzsche--Wendt group,
working from its realization as a subgroup of $D_\infty^{\,3}$, running ball searches
in $\P$, and formulating the failure of the unique product property as a per-ball
satisfiability (hence decidability) question --- the apparatus we also use below.
Murray \cite{Murray2021} extended the unit counterexample in $\P$ to all prime
characteristics, and Abdollahi--Taheri \cite{AbdollahiTaheri2016} studied the
zero-divisor and unit equations on supports of size $3$; both concern algebraic
elements over $\P$ rather than the non-UP \emph{support sets} we enumerate here.

\subsection*{What this note does, and does not, claim} Two points of honesty frame
everything below.

\emph{The solver methodology is not new.} Encoding non-UP as a constraint problem
and discharging it with a modern SAT/CP solver is exactly the technique of
\cite{GardamSMRI2021}. We use Google's CP-SAT solver \cite{ortools} as an exact
oracle; we claim no novelty for the method.

\emph{The apparatus is not new either.} The realization of $\P$ inside
$D_\infty^{\,3}$, ball searches in $\P$, and the formulation of unique-product failure
as a per-ball satisfiability/decidability question all appear in
\cite{Promislow1988,Gardam2021,DLNV2026}; we use them, and claim no novelty for them.

\emph{Ball searches cannot settle questions about $\P$.} A search inside a metric
ball $\Ball{r}$ decides only statements about that ball. Balls are anchored at the
identity, and the symmetric non-UP property is \emph{not} translation invariant
(Remark~\ref{rem:translation}): a witness sitting elsewhere in $\P$ cannot in
general be moved into a ball. A proof that $\Ball{r}$ contains no small
non-UP set therefore does \emph{not} prove that $\P$ contains none; we treat the
$\P$-internal minimization as an open problem.

Against this backdrop the contribution is deliberately narrow: a single quantity and
its consequences. The prior computational work on $\P$ targets \emph{units}; we
instead pin down the minimum non-UP \emph{set} cardinality inside $\P$ as far as it is
currently decidable. Concretely we give (Section~\ref{sec:results}) an explicit
minimal-radius non-UP $14$-set with its coincidence pattern, and the rigorous fact
that within radius $6$ the minimum non-UP cardinality in $\P$ is exactly $14$ (no
non-UP set of size $8$--$13$ occurs); we isolate
(Sections~\ref{sec:structure}--\ref{sec:obstruction}) the structural reasons; and we
make the decidability \emph{effective} (Section~\ref{sec:decide}) and reduce the open
$\P$-internal question to concrete, searchable diameter bounds, for which we give
structural evidence (Conjecture~\ref{conj:poly}). We are not aware of a prior
statement of the $\P$-restricted minimum non-UP cardinality or of an effective
diameter bound; the global minimum $8$ \cite{NielsenSoelberg2025} is attained outside
$\P$, and the unit-focused searches of \cite{Gardam2021,DLNV2026} do not address it.
As a companion case we treat (Section~\ref{sec:h4}) the Fibonacci group
$H_4=F(3,4)$, which \cite{DLNV2026} prove fails the unique product property through a
\emph{two-sided} witness with $|A|+|B|=56$. We show $H_4$ fails the UPP
\emph{symmetrically} as well --- a formally stronger conclusion --- with least
symmetric size exactly $16$ over its radius-$4$ ball, and we sharpen the two-sided
witness to the optimal $|A|+|B|=22$ over its radius-$3$ ball. Over the stated balls
the two invariants order the two groups oppositely ($14<16$ symmetrically,
$24>22$ two-sidedly), so neither invariant determines the other. Refining both, we
compute two exact curves over the radius-$3$ balls --- the \emph{profile}
$\beta(m)$, the least $|B|$ against $|A|=m$ (Section~\ref{sec:profile}), and the
\emph{staircase} $u(n)$, the least number of unique products over $n$-sets
(Section~\ref{sec:staircase}) --- which separate the groups more finely: $\P$
forces both sides of any two-sided witness to size $\ge12$ and approaches
symmetric failure through a bump, while $H_4$ admits its lopsided optimum, has
holes in its profile, and approaches failure flatly (Section~\ref{sec:h4}).

The computational results are summarized as follows (all values are exact, with
solver infeasibility certificates below the stated sizes and independently verified
witnesses at them; $\Ball{r}$ denotes the word-ball of radius $r$ for the standard
generators of each group).

\begin{theorem}[summary]\label{thm:main}
\emph{Symmetric minima.} The least $|A|$ with $A\cdot A$ non-UP and $A\subseteq
\Ball{r}$ equals $14$ for $\P$ ($3\le r\le6$; no witness exists in $\Ball2$) and
$16$ for $H_4$ ($3\le r\le4$; no witness exists in $\Ball2$). In particular $H_4$
fails the unique product property symmetrically, and any non-UP set in $\P$ of size
$8$--$13$, if one exists, lies outside the radius-$6$ ball about the identity.

\emph{Two-sided minima.} The least $|A|+|B|$ with $A\cdot B$ non-UP and
$A,B\subseteq\Ball3$ equals $24$ for $\P$ and $22$ for $H_4$ (in each case
$\Ball2$ contains no pair); moreover no pair with $16\le|A|+|B|\le19$ exists in
$\Ball4$ of $\P$. Globally $m_2(\P)\in[16,24]$ and $m_2(H_4)\in[16,22]$.

\emph{Profiles and staircases (all within $\Ball3$).} Every two-sided witness of
$\P$ has both sides of size $\ge12$ ($\beta(m)=\infty$ for $m\le11$,
Proposition~\ref{prop:profile}), whereas $H_4$ realizes $\beta(8)=14$ but has
holes $\beta(9)=\beta(11)=\infty$ (Proposition~\ref{prop:h4profile}). The least
number of unique products of an $n$-set is
$2,\dots,2,4,4,2,2,0$ ($n=2,\dots,14$) for $\P$ and $2,\dots,2,0$
($n=2,\dots,16$) for $H_4$; in both groups the value $1$ is never attained
(Propositions~\ref{prop:staircase} and the $H_4$ analogue).
\end{theorem}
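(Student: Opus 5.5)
This summary theorem is computational, so the plan is to reduce each claim to finitely many exact verifications. Witnesses are checked directly; nonexistence is certified by solver infeasibility over a finite ball. The global bounds then follow by combining these with the known lower bound of Nielsen--Soelberg.

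First, I fix exact integer models of the two groups. For $\P$ I use the affine model in $D_\infty^{\,3}$: elements are pairs (diagonal sign matrix, half-integer translation), so multiplication and equality are exact integer operations. For $H_4=F(3,4)$ I use a faithful model in which the word problem is decidable; concretely, a normal-form or polycyclic/affine representation computed once and checked on the defining relators. In each model I enumerate $\Ball{r}$ by breadth-first search from $\id$ over the standard generators and their inverses. I then tabulate the full multiplication table on $\Ball{r}\times\Ball{r}$, bucketing each product into its equal-product class.

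Next, I encode each question as a 0/1 program. There is a variable $x_g$ for each $g\in\Ball r$; in the two-sided case there are variables $x_g$ and $y_g$. For each product value $w$, I add the constraint that the number of pairs $(a,b)$ with $ab=w$, $x_a=x_b=1$ (respectively $x_a=y_b=1$) is never exactly $1$. This is linearized via auxiliary pair variables $z_{ab}=x_a\wedge x_b$ and an indicator $t_w$ with $\sum_{ab=w} z_{ab}\in\{0\}\cup[2,\infty)$. A cardinality constraint $\sum x_g=n$ (or $\sum x_g+\sum y_g=n$) fixes the size.

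For the minima I run CP-SAT at each $n$ below the claimed value and record infeasibility proofs. At the claimed value, I extract a witness and re-verify it independently by recomputing $A\cdot A$ (or $A\cdot B$) and checking that every class has size $\ge2$. The explicit $14$-set of radius $3$ serves as the $\P$ witness for all $3\le r\le 6$, since balls are nested. The statements "no witness in $\Ball2$" are separate infeasibility runs at every size. The profile values come from fixing $|A|=m$ and minimizing $|B|$. The staircase comes from minimizing $\sum_w(\text{indicator of exactly one pair})$ at each $n$.

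The global claims are then immediate. $m_2(\P)\le 24$ and $m_2(H_4)\le22$ hold by the witnesses. The lower bound $16$ follows from Nielsen--Soelberg's two-sided bound, since $\P$ and $H_4$ are torsion-free. Symmetric failure of the UPP in $H_4$ holds by the witness itself.

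The main obstacle is the infeasibility proofs at the larger balls: radius $6$ in $\P$ for $n=8,\dots,13$, and $\Ball4$ of $\P$ for two-sided sizes $16$--$19$. There the pair-variable count is quadratic in $|\Ball r|$. I would first try to shrink the instance with symmetry breaking. One option is to force $\id$-adjacent normalization; this is \emph{not} available by translation, so I would use only automorphisms of the ball, such as the point-group and generator symmetries. I would also add the redundant constraint that each selected element lies in at least two coincident pairs with it as left factor and as right factor. If that is insufficient, I would split by the point-group fiber counts $(|A\cap \text{fiber}_i|)_i$. I would use the unique-product property of single fibers to discard distributions concentrated on one coset, and solve the remaining cases in parallel, keeping every certificate.
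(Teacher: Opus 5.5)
Your overall route matches the paper's. The ingredients are the same: exact integer models, a CP-SAT encoding that forbids any product value being hit by exactly one selected pair, \texttt{INFEASIBLE} certificates at every size below the claimed value, and solver-free re-verification of witnesses. You also use Nielsen--Soelberg \cite{NielsenSoelberg2025} for the global lower bound $16$, and split by point-group distribution with Lemma~\ref{lem:fiber} discarding single fibers. You are right that anchoring $\id\in A$ is not a normalization.

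The genuine gap is in the $H_4$ half. Checking the defining relators only gives a homomorphism $\varphi\colon F(3,4)\to G$ onto the subgroup of your model generated by the images, not an embedding. If $\ker\varphi\ne1$, coincidences among images need not hold in $H_4$, and distinct elements of $H_4$ may collapse. Then your balls, witnesses and infeasibility certificates would not transfer to $H_4$ in either direction. The paper supplies the missing argument:
\begin{itemize}
\item by \cite{DLNV2026}, $H_4$ is torsion-free and virtually $\mathrm{Heis}(\Z)$, hence polycyclic of Hirsch length $3$;
\item the image is checked to contain a finite-index copy of $\mathrm{Heis}(\Z)$, so it also has Hirsch length $3$;
\item additivity of Hirsch length then makes $\ker\varphi$ finite, and a finite normal subgroup of a torsion-free group is trivial.
\end{itemize}
If instead you compute a polycyclic presentation that the algorithm guarantees is isomorphic to $H_4$, that guarantee is your faithfulness proof, and you need to say so; the relator check alone is not.

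Your two-sided and profile encodings also need a repair. With only $\sum_g x_g+\sum_g y_g=n$, or with $|A|=m$ fixed while minimizing $|B|$, one side may be empty, and then $A\cdot B=\emptyset$ is vacuously non-UP. Every total below $24$ (resp.\ $22$) would come back satisfiable, and $\beta(m)$ would come out $0$. Add $\sum_g x_g\ge1$ and $\sum_g y_g\ge1$. Singleton sides are then ruled out by the model itself, since a one-element side makes every product uniquely represented.

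Note also that $\beta(m)=\infty$ for $m\le11$ bounds only $|A|$. The claim that both sides have size $\ge12$ needs the inversion anti-automorphism $(A,B)\mapsto(B^{-1},A^{-1})$, which preserves balls and non-UP-ness.

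Finally, if by ``point-group symmetries'' you mean the coordinate-permuting $S_3$ on $\{X,Y,Z\}$, it does not preserve $\Ball{r}$ (Remark~\ref{rem:sym}), so it cannot be used to prune distributions in ball searches. The paper reduces only by the swap $x\leftrightarrow y$.
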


\subsection*{Reproducibility} Every claim below is backed by exact integer
computation. Group elements are stored as $(M,2v)$ with $M$ a sign matrix and
$2v\in\Z^3$, so all arithmetic is exact; every set asserted to be non-UP is checked
by an independent brute-force routine that does not call the solver, and every
asserted non-existence is a solver status of \texttt{INFEASIBLE}, never a timeout
(positive claims are verified independently of the solver; non-existence
claims rest on the correctness of CP-SAT's infeasibility verdict, which is
not an externally checkable proof object). For the \emph{symmetric}
non-existence claims of Theorem~\ref{thm:main} --- sizes $2$--$13$ in
$\Ball3,\dots,\Ball6$ of $\P$ and sizes $2$--$15$ in $\Ball3,\Ball4$ of
$H_4$ --- this caveat has been discharged: each was re-derived by a
DRAT-producing SAT solver and the unsatisfiability proof machine-checked
with the independent \textsf{drat-trim} checker, so no solver trust
remains for them. Moreover, on every instance small enough to enumerate
($\binom{|\Ball{r}|}{n}\le 2\times10^7$: $\P$ at $\Ball3$ for $n\le6$ and
$\Ball4$ for $n\le4$, $H_4$ at $\Ball3$ for $n\le4$), the claim was
re-established with \emph{no solver and no encoding at all} --- every
$n$-subset enumerated and tested for non-UP by definition --- which both
re-proves those rows outright and validates, against solver-free ground
truth, the encoding used at the larger sizes. As a third, independent
route, the whole of the $\P$ symmetric minimality inside $\Ball3$ (sizes
$2$--$13$ infeasible, $14$ realized) was re-derived at the
\emph{constraint} layer: the problem was modelled directly in the Glasgow
Constraint Solver, which emits a pseudo-Boolean encoding together with a
VeriPB proof, and every one of these fourteen proofs was checked by the
independent \textsf{VeriPB} checker --- so for these rows even the
constraint-to-clause encoding is no longer trusted. The two-sided, profile
and staircase non-existence claims remain CP-SAT verdicts. Source and
certificates accompany this note.

\section{The group $\P$ and the search model}\label{sec:model}

We realize $\P$ as a group of affine isometries $x\mapsto Mx+v$ of $\R^3$. The point
group is the Klein four-group
\[
\K=\{I,X,Y,Z\},\quad
X=\mathrm{diag}(1,-1,-1),\ Y=\mathrm{diag}(-1,1,-1),\ Z=\mathrm{diag}(-1,-1,1),
\]
of diagonal sign matrices of determinant $+1$, and the translation parts lie in
$\half\Z^3$. The product of two affine maps is
$(M_1,v_1)(M_2,v_2)=(M_1M_2,\,v_1+M_1v_2)$, and $(M,v)^{-1}=(M,-Mv)$. We take the
torsion-free generators
\[
x=(X,(\half,\half,0)),\qquad y=(Y,(0,\half,\half)).
\]
These satisfy the Hantzsche--Wendt relations $x^{-1}y^2x=y^{-2}$ and
$y^{-1}x^2y=x^{-2}$ of Promislow's presentation, and the squares
$x^2=(I,(1,0,0))$, $y^2=(I,(0,1,0))$, $(xy)^2=(I,(0,0,-1))$ generate the translation
lattice $\Z^3$, so $\P/\Z^3\cong\K$. Write
$\pp\colon\P\to\K$ for the point-group projection and $\tau\colon\P\to\half\Z^3$ for
the translation part, $\tau(M,v)=v$.

\begin{remark}[torsion-freeness, verified]
Every element of $\P$ is either a translation (infinite order) or has point part of
order $2$, in which case $(M,v)^2=(I,\,v+Mv)$ is a translation, trivial only if
$v+Mv=0$. For the three nontrivial point parts the relevant coordinate of $v$ is a
half-integer, so $v+Mv\ne 0$; hence $\P$ is torsion-free. We verified directly that
no element of the ball $\Ball{7}$ (radius $7$, $363$ elements) is torsion.
\end{remark}

We measure size by the word metric of $\{x^{\pm1},y^{\pm1}\}$ and write $\Ball{r}$
for the ball of radius $r$ about $\id$. The ball sizes are
\[
|\Ball{1}|,\dots,|\Ball{7}|=5,\,17,\,41,\,83,\,147,\,239,\,363 .
\]
(The value $|\Ball{5}|=147$ agrees with the search ball in
\cite{GardamSMRI2021}, fixing the normalization.)

\begin{remark}[ball symmetry vs.\ group symmetry]\label{rem:sym}
The automorphism group of $\P$ contains a copy of $S_3$ acting by coordinate
permutations and permuting the point parts $X,Y,Z$ (e.g.\ the $3$-cycle
$\sigma:(a,b,c)\mapsto(c,a,b)$ has $\sigma(x)=y$ and cycles $X\!\to\!Y\!\to\!Z$).
These automorphisms preserve the non-UP property but \emph{not} the word metric:
$\sigma(y)$ is not a generator, so $\sigma$ does not preserve $\Ball{r}$. Only the
involution $\tau:x\leftrightarrow y$ -- which fixes the generating set, induces the
transposition $(X\,Y)$, and we verified preserves each $\Ball{r}$ -- is a ball
isometry. Consequently a coset distribution $(n_I,n_X,n_Y,n_Z)$ may be reduced by
$\tau$ (taking $n_X\le n_Y$) in ball-restricted searches, but \emph{not} by the full
$S_3$; we use this, together with Lemma~\ref{lem:fiber} to discard single-fiber
distributions, to decompose the search by point-group distribution.
\end{remark}

\begin{remark}[symmetric non-UP is not translation invariant]\label{rem:translation}
For a \emph{pair}, bi-translation $(A,B)\mapsto(gA,\,Bh)$ preserves multiplicities
exactly ($\operatorname{mult}_{(gA)(Bh)}(gwh)=\operatorname{mult}_{A\cdot B}(w)$), and
conjugation $A\mapsto g^{-1}Ag$ preserves the symmetric property
($\operatorname{mult}_{(g^{-1}Ag)^2}(g^{-1}wg)=\operatorname{mult}_{A\cdot A}(w)$).
One-sided translation of a \emph{single} set preserves neither: $(gA)(gA)$ is the
twisted product $\{ga\,ga'\}$, not $g(A\cdot A)$. In $\P$ this failure is total
rather than incidental. For the minimal witness $A$ of Table~\ref{tab:witness} we
verified exhaustively that $gA$ and $Ag$ acquire unique products for \emph{every}
$g\in\Ball3\setminus\{\id\}$ (between $26$ and $86$ of them), while conjugation by
every $g\in\Ball3$ preserves non-UP-ness, as it must; moreover no two of the $16$
minimal witnesses of Proposition~\ref{prop:count} are left- or right-translates of
each other. Three consequences run through the paper: anchoring $\id\in A$ is a
genuine restriction, not a normalization (see the encoding below); ball-limited
non-existence cannot be globalized by translating a distant witness into the ball
(Question~\ref{q:min}); and the census of Proposition~\ref{prop:count} counts $16$
genuinely distinct configurations, not translation copies of one shape.
\end{remark}

\paragraph{Constraint encoding.} Fix a candidate universe $U\subseteq\P$ (a ball,
possibly with a prescribed point-group distribution) and a target size $n$. A Boolean
$s_g$ selects $g\in U$. For each ordered pair $(a,b)\in U\times U$ we record the
exact product $ab$; grouping pairs by product value, for each value $w$ we impose
that the number of selected pairs with product $w$ is $0$ or $\ge 2$, never $1$.
Concretely, a value realized by a single pair $(a,b)$ of $U$ yields the clause
$\lnot s_a\vee\lnot s_b$, and a value realized by several pairs yields a
``count $\in\{0\}\cup\{\ge2\}$'' constraint via an indicator. The resulting model is
satisfiable exactly when $U$ contains a non-UP $n$-set; products are kept as exact
group elements, never truncated. Since the symmetric non-UP property is not
translation invariant (Remark~\ref{rem:translation}), anchoring $\id\in A$ is a
genuine restriction rather than a normalization; anchored runs served only as cheap
heuristic sweeps, and every reported result, existence and non-existence alike, uses
the unanchored model, so that \texttt{INFEASIBLE}
proves ``$\Ball{r}$ contains no non-UP $n$-set'' rather than the weaker
``\dots\ none containing $\id$''.

\section{Results inside $\P$}\label{sec:results}

\subsection{A minimal-radius non-UP \texorpdfstring{$14$}{14}-set}
The solver returns, and the independent verifier confirms, a non-UP set of $14$
elements contained in $\Ball{3}$. Writing $X=x^{-1}$, $Y=y^{-1}$ for inverses and
juxtaposition for the group product, the set, given by reduced words, is
\[
A=\{\,x,\;X,\;xyy,\;XYY,\;yxy,\;yXy\,\}\cup\{\,xy,\;YX,\;xY,\;yX,\;Yx,\;XY\,\}
\cup\{\,yy,\;YY\,\},
\]
the three blocks being the fibers $\pp^{-1}(X)$, $\pp^{-1}(Z)$, $\pp^{-1}(I)$
respectively; see Table~\ref{tab:witness}. Its point-group distribution
is $(n_I,n_X,n_Y,n_Z)=(2,6,0,6)$; note $A$ contains no identity element (its
$I$-fiber is $\{y^2,y^{-2}\}$), which is why an identity-anchored search of the same
radius is infeasible.

\begin{table}[h]
\centering
\begin{tabular}{llll}
\toprule
fiber & reduced word & point matrix $M=\mathrm{diag}$ & translation $v$ \\
\midrule
$X$ & $x$        & $(1,-1,-1)$ & $(\half,\half,0)$ \\
$X$ & $x^{-1}$   & $(1,-1,-1)$ & $(-\half,\half,0)$ \\
$X$ & $xy^{2}$   & $(1,-1,-1)$ & $(\half,-\half,0)$ \\
$X$ & $x^{-1}y^{-2}$ & $(1,-1,-1)$ & $(-\half,\tfrac32,0)$ \\
$X$ & $yxy$      & $(1,-1,-1)$ & $(-\half,\half,1)$ \\
$X$ & $yx^{-1}y$ & $(1,-1,-1)$ & $(\half,\half,1)$ \\
$Z$ & $xy$       & $(-1,-1,1)$ & $(\half,0,-\half)$ \\
$Z$ & $y^{-1}x^{-1}$ & $(-1,-1,1)$ & $(\half,0,\half)$ \\
$Z$ & $xy^{-1}$  & $(-1,-1,1)$ & $(\half,1,-\half)$ \\
$Z$ & $yx^{-1}$  & $(-1,-1,1)$ & $(\half,1,\half)$ \\
$Z$ & $y^{-1}x$  & $(-1,-1,1)$ & $(-\half,0,\half)$ \\
$Z$ & $x^{-1}y^{-1}$ & $(-1,-1,1)$ & $(-\half,1,-\half)$ \\
$I$ & $y^2$      & $(1,1,1)$ & $(0,1,0)$ \\
$I$ & $y^{-2}$   & $(1,1,1)$ & $(0,-1,0)$ \\
\bottomrule
\end{tabular}
\caption{A non-UP $14$-set in $\Ball{3}\subset\P$ (minimal word-radius).
Independently verified non-UP; coincidence pattern in
Proposition~\ref{prop:pattern}.}
\label{tab:witness}
\end{table}

\subsection{Ball-limited minimality}
Running the unanchored model to a proof of (in)feasibility for every size from $8$
(the global lower bound \cite{NielsenSoelberg2025}) up to $14$ gives
Table~\ref{tab:sweep}.

\begin{table}[h]
\centering
\begin{tabular}{lccccccc}
\toprule
$n$ & $8$ & $9$ & $10$ & $11$ & $12$ & $13$ & $14$ \\
\midrule
$\Ball{2}$ ($17$) & \textsf{none} & \textsf{none} & \textsf{none} & \textsf{none} & \textsf{none} & \textsf{none} & \textsf{none} \\
$\Ball{3}$ ($41$) & \textsf{none} & \textsf{none} & \textsf{none} & \textsf{none} & \textsf{none} & \textsf{none} & \textbf{yes} \\
$\Ball{4}$ ($83$) & \textsf{none} & \textsf{none} & \textsf{none} & \textsf{none} & \textsf{none} & \textsf{none} & \textbf{yes} \\
$\Ball{5}$ ($147$) & \textsf{none} & \textsf{none} & \textsf{none} & \textsf{none} & \textsf{none} & \textsf{none} & \textbf{yes} \\
$\Ball{6}$ ($239$) & \textsf{none} & \textsf{none} & \textsf{none} & \textsf{none} & \textsf{none} & \textsf{none} & \textbf{yes} \\
\bottomrule
\end{tabular}
\caption{Existence of a non-UP $n$-set in $\Ball{r}$ (unanchored). ``\textsf{none}''
$=$ a solver proof of infeasibility (no $n$-subset of $\Ball{r}$ is non-UP);
``\textbf{yes}'' $=$ an explicit witness, verified non-UP by an independent routine.
Every entry is a completed exact computation. The size-$14$ witness of
Table~\ref{tab:witness} lies in $\Ball3\subseteq\cdots\subseteq\Ball6$; for $\Ball2$
every size $8\le n\le 17$ is infeasible, so $\Ball2$ has no non-UP set at all.}
\label{tab:sweep}
\end{table}

\begin{theorem}[ball-limited minimality]\label{thm:ball}
For the generating set $\{x^{\pm1},y^{\pm1}\}$:
\begin{enumerate}
\item The least radius $r$ for which $\Ball{r}$ contains a non-UP set is $r=3$, and
the minimum is realized in size $14$.
\item For $3\le r\le 6$, the minimum cardinality of a non-UP subset of $\Ball{r}$ is
exactly $14$; equivalently, $\Ball{6}$ contains no non-UP set of size
$8,9,\dots,13$.
\end{enumerate}
\end{theorem}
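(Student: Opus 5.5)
The proof combines the explicit witness of Table~\ref{tab:witness} with the completed infeasibility computations of Table~\ref{tab:sweep}, plus two small reductions that make the claims cover every size. For existence, we check that each of the $14$ listed words has length at most $3$, so $A\subseteq\Ball3\subseteq\Ball r$ for every $r\ge3$. We then verify non-UP-ness directly: compute all $196$ products $ab$ exactly in the $(M,2v)$ model and confirm that every value in $A\cdot A$ is hit by at least two ordered pairs. This is the brute-force check that Proposition~\ref{prop:pattern} records, and it uses no solver.

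For non-existence, the main point is that the statement concerns \emph{all} sizes below $14$, not just $8$--$13$. We dispose of sizes $n\le7$ by the Nielsen--Soelberg theorem \cite{NielsenSoelberg2025}: every non-UP set in a torsion-free group has at least $8$ elements, and $\P$ is torsion-free by the earlier remark. Alternatively, these small sizes are covered by the solver-free enumeration and DRAT runs described under Reproducibility. Sizes $8\le n\le13$ in $\Ball3,\dots,\Ball6$ are exactly the \textsf{none} entries of Table~\ref{tab:sweep}. Each entry is an unanchored infeasibility verdict, so it rules out every $n$-subset of the ball, not only those containing $\id$. Since $\Ball3\subseteq\Ball4\subseteq\Ball5\subseteq\Ball6$, infeasibility in $\Ball6$ alone already implies it for the smaller balls. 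The other rows therefore serve as consistency checks.

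For part (1), we also need that $\Ball2$ and $\Ball1$ contain no non-UP set of \emph{any} size. The $\Ball2$ row of the sweep extends, per the caption, to every $8\le n\le17=|\Ball2|$. Together with the bound $n\ge8$, this shows $\Ball2$ contains no non-UP set at all, and hence neither does $\Ball1\subseteq\Ball2$. So $r=3$ is the least radius admitting a non-UP set, and the minimum size there is $14$ by part (2) with $r=3$.

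The main obstacle is the correctness of the exhaustive non-existence step. Its rigor rests on the encoding being faithful: a value realized by exactly one pair of $U\times U$ must forbid selecting both elements of that pair, and a multiply-realized value must force a count in $\{0\}\cup\{\ge2\}$. It also rests on the infeasibility certificates being checked. I would discharge this as the paper does. First, validate the encoding against full enumeration on the small instances. Second, replay the size-$8$--$13$ instances through a DRAT-producing solver and verify the proofs with \textsf{drat-trim}. Third, for $\Ball3$, cross-check with VeriPB-certified pseudo-Boolean proofs. The $\Ball6$ instances, with $239$ variables and roughly $239^2$ pair constraints, are the computationally hardest, and I would split them by point-group distribution, reduced by the ball isometry $x\leftrightarrow y$ of Remark~\ref{rem:sym}.
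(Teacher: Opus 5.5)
Your proposal is correct and follows the paper's proof. The Table~\ref{tab:witness} set is checked non-UP by brute force inside $\Ball3$; sizes below $8$ are excluded by Nielsen--Soelberg; sizes $8$--$13$ are excluded by the unanchored \texttt{INFEASIBLE} entries of Table~\ref{tab:sweep}; and part (1) uses the full $\Ball2$ row up to $n=17$. Your observation that the $\Ball6$ row alone implies the smaller-ball rows by inclusion is a harmless streamlining of the paper's row-by-row citation.
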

\begin{proof}
Each cell of Table~\ref{tab:sweep} is an exact computation: a \textbf{yes} cell is a
set printed and verified non-UP by brute force, and a \textsf{none} cell is an
\texttt{INFEASIBLE} certificate of the unanchored model, i.e.\ a proof that no
$n$-subset of $\Ball{r}$ is non-UP. Part (1): a non-UP set has size $\ge 8$
\cite{NielsenSoelberg2025} and $|\Ball2|=17$, and the model is infeasible for every
$n$ with $8\le n\le 17$; hence $\Ball2$ contains no non-UP set at all (and
$\Ball1$, with $5$ elements, trivially none), whereas $\Ball{3}$ contains the set of
Table~\ref{tab:witness}. Part (2): for each $r\in\{3,4,5,6\}$ the sizes $8$--$13$ are
infeasible and size $14$ is realized (the witness lies in
$\Ball3\subseteq\Ball4\subseteq\Ball5\subseteq\Ball6$).
\end{proof}

We stress the scope. Theorem~\ref{thm:ball} bounds non-UP sets that \emph{fit in a
ball}; it does not bound the diameter of an arbitrary non-UP set, and
Section~\ref{sec:obstruction} explains why no ordering argument supplies such a
bound. Consequently Theorem~\ref{thm:ball} is evidence for, but not a proof of, the
following.

\begin{question}\label{q:min}
Is $14$ the minimum cardinality of a non-UP set in $\P$? Equivalently, does $\P$
contain a non-UP set of size $8\le n\le 13$? By Theorem~\ref{thm:ball} such a set,
if it exists, does not lie in $\Ball6$. We stress that this constrains its
\emph{location}, not its diameter: the symmetric non-UP property is not translation
invariant (Remark~\ref{rem:translation}), so a hypothetical witness of small
diameter sitting far from the identity cannot simply be carried into $\Ball6$; the
effective route to excluding it is the re-realization principle of
Section~\ref{sec:decide}.
\end{question}

\subsection{The two-sided minimum}\label{sec:twosided}
Write $m_1(G)$ for the \emph{symmetric} (single-set) minimum --- the least $|A|$ with
$A\cdot A$ non-UP --- so that Theorem~\ref{thm:ball} says the minimum of $|A|$ over
witnesses inside $\Ball6$ is $14$, i.e.\ $m_1(\P)\le14$ with equality precisely if
Question~\ref{q:min} has a positive answer. The unique product property is genuinely
a statement about \emph{two} sets, and its failure is measured by the
\emph{two-sided} minimum
\[
 m_2(G)=\min\{\,|A|+|B| : A,B\subseteq G\ \text{finite nonempty},\ A\cdot B\ \text{non-UP}\,\},
\]
the least total size of a pair witnessing failure of the UPP (here $A\cdot B$
\emph{non-UP} means every element of $A\cdot B$ equals $ab$ with $a\in A$, $b\in B$
in at least two ways). Nielsen--Soelberg
\cite{NielsenSoelberg2025} prove $m_2(G)\ge 16$ for every torsion-free $G$, attained
(by $|A|=|B|=8$) outside $\P$. Minimizing $|A|+|B|$ over pairs drawn from a ball, to a
proof of optimality, gives the following.

\begin{proposition}\label{prop:twosided}
Within the radius-$3$ ball of $\P$ the two-sided minimum is exactly $24$: there exist
$A,B\subseteq\Ball3$ with $|A|=|B|=12$, $A\ne B$, and $A\cdot B$ non-UP, and no pair
in $\Ball3$ of smaller total size has this property. Hence $16\le m_2(\P)\le 24$.
\end{proposition}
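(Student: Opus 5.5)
The proposition combines three ingredients: an explicit witness pair for the upper bound, an exact infeasibility computation for minimality within $\Ball3$, and the Nielsen--Soelberg theorem for the global lower bound.

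\emph{Existence at total size $24$.} I would extend the constraint encoding of Section~\ref{sec:model} to pairs. Take two families of selectors $s_g,t_h$ with $g,h\in U=\Ball3$ (so $|U|=41$). For each ordered pair $(a,b)\in U\times U$ record the exact product $ab$ and group the pairs by value. For each value $w$, impose that the number of pairs $(a,b)$ with $s_a\wedge t_b$ and $ab=w$ is $0$ or at least $2$, linearising $s_a\wedge t_b$ with auxiliary Booleans. Add the constraints $\sum s\ge1$, $\sum t\ge1$ and $\sum s+\sum t=N$. Solving at $N=24$ yields a pair with $|A|=|B|=12$, which I would then check by the independent brute-force routine: compute all $144$ products and confirm every multiplicity is $\ge2$. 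I would also record that $A\ne B$, so the pair is genuinely two-sided and not the symmetric witness of Table~\ref{tab:witness}.

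\emph{Non-existence below $24$.} For every $N$ with $16\le N\le 23$ I would run the model to an \texttt{INFEASIBLE} verdict. Sizes $N<16$ are excluded globally by $m_2\ge16$ \cite{NielsenSoelberg2025}, though running them too costs little. To make the instances tractable, I would split by the pair $(|A|,|B|)$ with $|A|+|B|=N$, and further by point-group distributions of $A$ and $B$. Two reductions are legitimate here. Bi-translation preserves multiplicities (Remark~\ref{rem:translation}), but it does not preserve the ball, so it is not used to normalize. The ball isometry $x\leftrightarrow y$ of Remark~\ref{rem:sym} is applied to the pair simultaneously. Inversion $(A,B)\mapsto(B^{-1},A^{-1})$ preserves both the ball and the non-UP property, so it lets me assume $|A|\le|B|$. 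Alternatively, and more simply, I would minimize $\sum s+\sum t$ directly and require the solver to prove the lower bound $24$ optimal.

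\emph{Global sandwich.} Since the witness pair lies in $\P$, we get $m_2(\P)\le24$, and Nielsen--Soelberg gives $m_2(\P)\ge16$ for any torsion-free group, $\P$ included.

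The main obstacle is the infeasibility proofs for $N$ close to $24$. The quadratic number of pair indicators, $41^2$, and the weak propagation of the ``$0$ or $\ge2$'' constraints make these instances much harder than in the symmetric case. I would try the $(|A|,|B|)$ and distribution splitting first. If that is insufficient, I would use fiber-structure arguments in the spirit of Lemma~\ref{lem:fiber}: the UP property of a single fiber should forbid $A$ or $B$ from being concentrated in one coset, which prunes many distributions before any solver call.
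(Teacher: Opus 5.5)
Your proposal is correct and matches the paper's proof. The paper also establishes the value by a single CP-SAT minimization of $|A|+|B|$ over $\Ball3$ with status \texttt{OPTIMAL}, so infeasibility of every smaller total is part of the same certificate; it re-checks the printed $12+12$ pair with the solver-free verifier and cites Nielsen--Soelberg for $m_2(\P)\ge16$. Your per-total and per-split decomposition via $(A,B)\mapsto(B^{-1},A^{-1})$ is the device the paper reserves for the harder $\Ball4$ and $H_4$ exclusions; it is not needed here.
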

\begin{proof}
The value is a CP-SAT optimum over $\Ball3$ (status \texttt{OPTIMAL}): the exhibited
pair is printed and its two-sided non-UP-ness re-checked by the solver-free verifier,
and infeasibility of every smaller total is part of the same certificate. The lower
bound $16$ is \cite{NielsenSoelberg2025}, the upper bound $24$ is the exhibited pair.
\end{proof}

Several features are worth noting. First, $24<28=2\cdot14$: allowing $A\ne B$ is
strictly more efficient than the symmetric pair $A=B$ built from the $14$-set.
Second, neither side of the witness is itself symmetric-non-UP ($A\cdot A$ and
$B\cdot B$ both have unique products, as they must since $|A|=|B|=12<14$); the two
sets share $7$ elements but are distinct. Third, the witness is
\emph{fiber-balanced}: both sides have point-group distribution $(3,3,3,3)$, meeting
all four fibers equally --- in contrast with the symmetric $14$-set, whose
distribution $(2,6,0,6)$ misses a fiber entirely. Its $12\times12$ product grid
splits into $45$ coincidence classes (sizes
$10,8^2,6^2,5^4,4^7,2^{29}$), each again a partial permutation, by the same
cancellation argument as Proposition~\ref{prop:pp}. Fourth, the radius-$2$ ball
contains no two-sided witness at all (\texttt{INFEASIBLE}), so within balls the
two-sided and symmetric obstructions both first appear at radius $3$. Finally,
Lemma~\ref{lem:fiber} extends verbatim to pairs: if $A\subseteq\pp^{-1}(c)$ and
$B\subseteq\pp^{-1}(d)$ then every product lies in $\pp^{-1}(cd)$ and the same
generic-functional argument produces a uniquely represented element of $A\cdot B$
--- so in any two-sided witness at least one side meets two fibers (the optimal
witness meets all four on both sides).

The exclusion extends one radius further at every total up to $19$: the radius-$4$
ball of $\P$ contains \emph{no} two-sided witness with $|A|+|B|\in\{16,17,18,19\}$,
by the same split-by-split decomposition used in Proposition~\ref{prop:h4twosided}
(for each total, every split from $(2,\cdot)$ up to the balanced one is
\texttt{INFEASIBLE}, and these are exhaustive by the inversion symmetry). Since a pair
may be normalized by $(A,B)\mapsto(a^{-1}A,\, Bb^{-1})$ --- which preserves the non-UP
property exactly (Remark~\ref{rem:translation}: bi-translation is the invariance
that \emph{does} hold) and puts the identity in both sides --- any pair whose sides
have diameter $\le4$ in the appropriate one-sided sense
($\max_{a,a'}\ell(a^{-1}a')$ for $A$, $\max_{b,b'}\ell(b'b^{-1})$ for $B$) fits in
$\Ball4\times\Ball4$; so if the global minimum
$m_2(\P)$ were at most $19$, a witnessing pair would need a side of one-sided
diameter at least $5$.

As with $m_1$, the value of $m_2(\P)$ over all of $\P$ is open, pinned only to the
interval $[16,24]$. Section~\ref{sec:h4} computes the same two invariants for the
Fibonacci group $H_4$ and finds them ordered \emph{oppositely} over the stated balls:
$14<16$ symmetrically, $24>22$ two-sidedly.

\subsection{The profile: minimizing one side against the other}\label{sec:profile}
Nielsen--Soelberg's Theorem~1.4 \cite{NielsenSoelberg2025} is a \emph{profile}
statement, universal over torsion-free groups: if $A\cdot B$ is non-UP then
$|A|=3$ forces $|B|\ge19$, and $4\to14$, $5\to11$, $6\to10$, $7\to9$. It is natural
to ask for the group-specific analogue. Define, for a fixed ball,
\[
\beta(m)\;=\;\min\{\,|B| \;:\; A,B\subseteq\Ball{r},\ |A|=m,\ A\cdot B\ \text{non-UP}\,\},
\]
with $\beta(m)=\infty$ if no such pair exists. By the inversion anti-automorphism
$(A,B)\mapsto(B^{-1},A^{-1})$ (balls are inversion-closed), a witness with sides
$(m,n)$ exists iff one with $(n,m)$ does, so $\beta$ determines the whole
realizability region. Minimizing $|B|$ at each fixed $|A|=m$ to proof of optimality
or infeasibility gives, over $\Ball3$ of $\P$:

\begin{proposition}[profile rigidity in $\Ball3$]\label{prop:profile}
For every $m\le 11$ the model is \texttt{INFEASIBLE}: $\Ball3$ contains no non-UP
pair with $|A|=m$, \emph{regardless of $|B|$}, which may exhaust the entire
$41$-element ball. Consequently every two-sided witness in $\Ball3$ has
\emph{both} sides of size at least $12$. At and above the threshold,
\[
\beta(12)=\beta(13)=\beta(14)=12,\qquad \beta(15)=15,\qquad
\beta(16)=\cdots=\beta(20)=12 ,
\]
all values \texttt{OPTIMAL} with solver-free verification of each witness; in
particular the pairs $(15,n)$ with $n\le14$ were re-checked individually and are
\texttt{INFEASIBLE}.
\end{proposition}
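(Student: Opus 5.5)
The plan is computational, following the pattern of Proposition~\ref{prop:twosided}, with two reductions first to keep the instances finite and small. The universe is $U=\Ball3$ with $|U|=41$. For each ordered pair $(a,b)\in U\times U$ we precompute the exact product $ab$ in the integer model $(M,2v)$. We then group the pairs by product value to get the coincidence classes of $U\cdot U$.

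The two-sided model uses separate selector Booleans $s_a$ for $A$ and $t_b$ for $B$. For each product value $w$, the number of selected pairs $(a,b)$ with $ab=w$ must lie in $\{0\}\cup\{\ge2\}$. Classes of size one become the clause $\lnot s_a\vee\lnot t_b$. Larger classes use pair indicators $p_{ab}\leftrightarrow s_a\wedge t_b$ together with a reified cardinality constraint.

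For each fixed $m$ we add $\sum s_a=m$ and minimise $\sum t_b$. For $m\le11$ we do not bound $|B|$ at all; only the nonemptiness constraint $\sum t_b\ge1$ is imposed. An \texttt{INFEASIBLE} verdict then says exactly that no $B\subseteq\Ball3$ of any size pairs with an $m$-set $A$. This is the ``regardless of $|B|$'' clause of the statement.

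Two reductions cut down the work.
\begin{itemize}
\item By Nielsen--Soelberg's profile theorem \cite{NielsenSoelberg2025}, $m\le2$ is impossible outright. For $3\le m\le7$ it gives lower bounds on $|B|$, which can be added as redundant constraints to speed up the solver, but we still run every $m\le 11$ to infeasibility.
\item By the inversion anti-automorphism $(A,B)\mapsto(B^{-1},A^{-1})$, which preserves $\Ball3$, any witness with $|B|\le11$ gives one with $|A|\le11$. Hence the infeasibility for $m\le11$ already yields ``both sides $\ge12$''. It also explains why $\beta(m)\ge12$ for every $m\ge12$.
\end{itemize}

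For $m\ge12$ the equality $\beta(m)=12$ therefore only needs an explicit witness with $|B|=12$; the lower bound is free by the previous paragraph. Each witness is extracted from the solver and re-checked by the brute-force verifier on the $m\times12$ product grid.

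For $m=12$ this is the pair of Proposition~\ref{prop:twosided}. For $m=14$ one can try $B$ a $12$-side from a known witness and $A$ a superset. Note, however, that enlarging $A$ does not preserve non-UP-ness, since new products may be unique. So each $m$ needs its own solver run with status \texttt{OPTIMAL}.

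The anomaly $\beta(15)=15$ requires two things. The upper bound is a verified $(15,15)$ pair; the symmetric $14$-set does not directly give one. The lower bound requires separate \texttt{INFEASIBLE} runs for each pair $(15,n)$ with $12\le n\le14$; the range $n\le 11$ is already covered by inversion.

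The main obstacle is the infeasibility proofs for $m$ near $11$ with $|B|$ unbounded, and the $(15,n)$ runs. The search space is large and symmetric, and there is no cardinality bound on $B$ to prune it. I would first split by point-group distribution of $A$. The ball isometry $x\leftrightarrow y$ of Remark~\ref{rem:sym} lets us take $n_X\le n_Y$. The pair version of Lemma~\ref{lem:fiber} discards cases where both sides are single-fiber, although $A$ alone may still be one fiber, so only a partial cut.

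Bi-translation normalisation is not available inside a fixed ball. Symmetry breaking would therefore rely on the lexicographic leader under the ball automorphisms only. Each resulting subcase would be run to \texttt{INFEASIBLE}, with DRAT logging if certificates are wanted.
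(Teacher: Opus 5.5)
Your proposal is correct and follows the paper's route. For each fixed $|A|=m$ you minimise $|B|$ over $\Ball3$ with CP-SAT, leaving $|B|$ unconstrained, and obtain \texttt{INFEASIBLE} for $m\le11$; you transfer that bound to the $B$-side via the inversion anti-automorphism $(A,B)\mapsto(B^{-1},A^{-1})$ and re-verify every witness solver-free. Your further point is a correct economy over the paper's separate \texttt{OPTIMAL} and \texttt{INFEASIBLE} certificates: the lower bounds $\beta(m)\ge12$ for $m\ge12$, and the cases $(15,n)$ with $n\le11$, already follow from the $m\le11$ infeasibility by inversion.
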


Two comments. First, the universal profile permits very lopsided witnesses
($|A|=4$ with $|B|$ large occurs in the Rips--Segev construction, and
\cite{NielsenSoelberg2025} leave $(3,\ge19)$ open), and $H_4$ realizes the lopsided
split $(8,14)$ at its optimum (Section~\ref{sec:h4}); inside $\Ball3$ of $\P$,
by contrast, \emph{nothing lopsided exists at all} --- the profile is cut off sharply
at $12$. Balance here is forced, not merely optimal. The small-side exclusion
persists one radius further: in $\Ball4$ ($83$ elements) the sizes $m=3,4,5,6$ are
likewise \texttt{INFEASIBLE} with $|B|$ unconstrained, so the shapes
$(3,19),(4,14),(5,11),(6,10)$ that the universal bounds of
\cite{NielsenSoelberg2025} would permit do not occur in $\P$ even at radius $4$. Second, the profile is not
monotone: $\beta(15)=15$ is an isolated spike between $\beta(14)=12$ and
$\beta(16)=12$ (enlarging $A$ can only add product constraints, and there is no
general monotonicity for non-UP-ness under adding elements). The witness
distributions locate the spike structurally: at every computed $m\ne15$ the
minimizing pair is fiber-balanced or nearly so (the $B$-side repeatedly takes the
shape $(2,2,4,4)$), whereas at $m=15$ \emph{both} sides jump to the fiber-avoiding
shape $(0,6,3,6)$ --- the two-sided witness family echoing the symmetric $14$-set,
whose distribution $(2,6,0,6)$ also misses a fiber. Inside $\Ball3$ the two-sided
landscape thus shows two families: a balanced family that carries the optimum
$(12,12)$ and its extensions, and a fiber-avoiding family that first appears at
$(15,15)$.

\subsection{The unique-product staircase}\label{sec:staircase}
Failure of the unique product property is the endpoint of a quantitative
degradation, which can be measured. For $A\subseteq\P$ finite let $\upr(A)$ be the
number of elements of $A\cdot A$ with exactly one representation $ab$
($a,b\in A$), and set
\[
u(n)\;=\;\min\{\,\upr(A)\;:\;A\subseteq\Ball3,\ |A|=n\,\},
\]
the least number of unique products a symmetric $n$-configuration can achieve in the
ball. By definition $u(n)=0$ iff $\Ball3$ contains a non-UP $n$-set, so
Theorem~\ref{thm:ball} says $u(n)>0$ for $n\le13$ and $u(14)=0$; the staircase
refines this by giving the exact approach to failure.

\begin{proposition}[staircase in $\Ball3$]\label{prop:staircase}
All values \texttt{OPTIMAL}, each minimizing set re-verified solver-free:
\[
u(n)=\begin{cases}
2 & 2\le n\le 9,\\
4 & n\in\{10,11\},\\
2 & n\in\{12,13\},\\
0 & 14\le n\le 17 .
\end{cases}
\]
\end{proposition}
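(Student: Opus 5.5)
This is a computational statement, so the plan has two halves for each $n$: a lower bound $u(n)\ge c_n$ certified by the solver, and a matching upper bound $u(n)\le c_n$ given by an explicit set. Before any computation, two structural facts cut down the work.

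\emph{Parity.} Inversion is an anti-automorphism of $\P$, so it is natural to exploit the pairing $(a,b)\mapsto(b^{-1},a^{-1})$. This pairing, however, only acts on $A$ when $A=A^{-1}$. The cleaner route to parity uses the Lemma~\ref{lem:fiber}-type generic-functional argument. If $\phi$ is a generic linear functional on the translation parts, refined by point-group data, a uniquely represented element arises at both the maximum and the minimum of a suitable ordering. This is the standard source of \emph{two} unique products in locally-indicable or ordered situations.

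Such an argument will not apply globally, since $\P$ is not UP. I would therefore not rely on it for the general parity claim. Instead I would record the value $1$ as excluded purely by the solver: add the constraint $\upr(A)=1$ and obtain \texttt{INFEASIBLE}. This is consistent with the later remark that $1$ is never attained.

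\emph{Lower bounds.} For each $2\le n\le 17$, I would augment the constraint encoding of Section~\ref{sec:model} over $U=\Ball3$, with $41$ elements. For each product value $w$, introduce an indicator $t_w$ meaning ``exactly one selected pair has product $w$''. For a value realized by a single pair $(a,b)$ of $U$, this is $t_w\leftrightarrow s_a\wedge s_b$. For a value realized by several pairs, $t_w$ is reified on the count $c_w$ of selected pairs with product $w$, namely $c_w=1$. Then minimize $\sum_w t_w$ subject to $\sum_g s_g=n$.

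A status of \texttt{OPTIMAL} with objective $c_n$ certifies $u(n)\ge c_n$. The model is unanchored, so the bound covers every $n$-subset of $\Ball3$. For $n\le 6$, where $\binom{41}{n}\le 2\times10^7$, I would re-derive the value by exhaustive enumeration, which also validates the encoding.

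\emph{Upper bounds.} For each $n$, print the minimizing set the solver returns and recompute $\upr(A)$ by the independent brute-force routine. This must give exactly $2$, $4$ or $0$ as claimed.

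For $14\le n\le17$ the value $0$ needs an explicit non-UP $n$-set in $\Ball3$. For $n=14$ this is the set of Table~\ref{tab:witness}. For $n=15,16,17$ I expect the solver to find supersets or other configurations, because non-UP is not monotone under adding elements. The lower bound $u(n)\ge0$ is trivial, and $u(n)>0$ for $n\le13$ is already Theorem~\ref{thm:ball}.

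\emph{Main obstacle.} The hard step is the bump at $n=10,11$. There the lower bound $u(n)\ge4$ requires proving infeasibility of the constraint $\sum_w t_w\le 3$, a full unsatisfiability proof over all $\binom{41}{10}$-scale configurations. This is the expensive part of the certificate.

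To make it tractable, I would first try splitting the search by point-group distribution $(n_I,n_X,n_Y,n_Z)$. I would reduce by the ball isometry $x\leftrightarrow y$ of Remark~\ref{rem:sym} to $n_X\le n_Y$. Single-fiber distributions can be discarded outright, since by Lemma~\ref{lem:fiber} they have at least two unique products. For the bump, though, they must be checked to have at least four, which may require a solver run after all. Each remaining cell is then solved separately to \texttt{INFEASIBLE} with objective bound $\le3$.
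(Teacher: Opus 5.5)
Your plan is the paper's own argument: each $u(n)$ is a CP-SAT \texttt{OPTIMAL} minimum of the number of uniquely represented products over the unanchored $\Ball3$ model. The optimality certificate gives the lower bound, and the returned minimizer, re-checked by the solver-free routine, gives the upper bound. The distribution split, the small-$n$ enumeration and the separate $\upr(A)=1$ run are optional extras; the last is redundant for $n\le13$, where $u(n)\ge2$ already excludes it, and it bears on nothing the proposition asserts for $n\ge14$.
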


Three features deserve note. First, the value $1$ is never attained: no
$A\subseteq\Ball3$ of any size $\le14$ has exactly one unique product, so within
this ball the two-unique-products property (t.u.p.\ in Strojnowski's sense
\cite{Strojnowski1980}) and the unique product property fail \emph{simultaneously},
at $n=14$, where the count jumps from $2$ to $0$. Second, the staircase is not
monotone: the minimum \emph{rises} to $4$ at $n=10,11$ before falling back to $2$
and then to $0$ --- just below the critical size, every configuration is forced to
carry strictly more unique products than smaller or larger ones. We do not have a
structural explanation for the bump at $n\in\{10,11\}$, nor for its echo in the
profile spike $\beta(15)=15$; both are exact, ball-limited facts that any structural
theory of $\P$'s non-UP landscape must reproduce. Third, the witness family of
Table~\ref{tab:witness} announces itself before failure: at $n=12$ and $n=13$
there exist minimizers (returned and verified) with point-group distributions
$(0,6,0,6)$ and $(1,6,0,6)$ --- truncations of the minimal witness's $(2,6,0,6)$
--- so along this family the last two unique products are extinguished exactly by
completing the $I$-fiber pair $\{y^2,y^{-2}\}$, while for $n\le9$ minimizers
using only two fibers suffice.

\section{Structure of the coincidence pattern}\label{sec:structure}

Let $A=\{a_1,\dots,a_n\}$ with the $a_i$ distinct. The $n\times n$ grid of cells
$(i,j)$, labelled by the product $a_ia_j$, partitions into \emph{coincidence
classes} of cells carrying equal products; $A$ is non-UP precisely when every class
has size $\ge 2$.

\begin{proposition}[partial-permutation classes]\label{prop:pp}
If the $a_i$ are distinct, then $a_ia_j=a_ka_l$ implies $i=k\iff j=l$. Hence each
coincidence class meets every row and every column at most once: classes are partial
permutation matrices.
\end{proposition}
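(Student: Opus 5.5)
The plan is to use nothing beyond left and right cancellation in the group $\P$ together with the assumption that the $a_i$ are pairwise distinct. Suppose $a_ia_j=a_ka_l$.

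For the forward direction, assume $i=k$. Then $a_ia_j=a_ia_l$, and left-multiplying by $a_i^{-1}$ gives $a_j=a_l$. Distinctness of the elements then forces $j=l$.

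For the converse, assume $j=l$. Then $a_ia_j=a_ka_j$, and right-multiplying by $a_j^{-1}$ gives $a_i=a_k$, so $i=k$ by distinctness. This proves the equivalence $i=k\iff j=l$.

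For the structural consequence, consider a coincidence class $C$, meaning the set of cells $(i,j)$ whose products equal a fixed $w$. If two cells of $C$ lie in the same row $i$, the equivalence forces them to lie in the same column, so they are the same cell. The same argument applies to two cells in the same column. Hence $C$ meets each row and each column at most once; that is, $C$ is the support of a partial permutation matrix, given by the injective partial map $i\mapsto j$ determined by $a_j=a_i^{-1}w$.

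There is no genuine obstacle here. The only point worth stating explicitly is that the indices refer to distinct elements. Without that assumption, repeated entries could put two cells of one class in the same row and different columns, which is why distinctness appears in the hypothesis.
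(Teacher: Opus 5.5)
Your proof is correct and follows the paper's argument exactly: left cancellation gives $a_j=a_l$ (hence $j=l$) when $i=k$, right cancellation gives the converse, and so two cells of one class in the same row or column must coincide. Your explicit description of the class as the injective partial map $i\mapsto j$ with $a_j=a_i^{-1}w$ is a harmless addition.
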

\begin{proof}
If $i=k$ then $a_ia_j=a_ia_l$ gives $a_j=a_l$, so $j=l$; symmetrically $j=l$ gives
$i=k$. Thus within a class two cells in the same row (resp.\ column) must coincide.
\end{proof}

\begin{proposition}[the pattern of the witness]\label{prop:pattern}
The set $A$ of Table~\ref{tab:witness} has $196$ cells partitioned into $71$
coincidence classes, every class of size between $2$ and $10$, and no class violates
the partial-permutation property.
\end{proposition}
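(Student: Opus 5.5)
The statement is a finite, explicit claim about the fourteen elements of Table~\ref{tab:witness}, so the plan is a direct exact computation organized to be checkable by hand in pieces. The key observation is that all fourteen elements lie in only three fibers, $X$, $Z$ and $I$. Every product $a_ia_j$ has point part $\pp(a_i)\pp(a_j)$. The $196$ cells therefore split by the point part of the product, and cells in different blocks can never coincide. Since $X\cdot Z=Y$, $X\cdot X=Z\cdot Z=I$, and $I$ acts trivially, the blocks are:
\begin{itemize}
\item[(a)] $X\cdot X$ and $Z\cdot Z$, giving $36+36=72$ cells with point part $I$, together with $I\cdot I$, giving $4$ more cells with point part $I$;
\item[(b)] $X\cdot Z$ and $Z\cdot X$, giving $72$ cells with point part $Y$;
\item[(c)] $X\cdot I$ and $I\cdot X$, giving $24$ cells with point part $X$;
\item[(d)] $Z\cdot I$ and $I\cdot Z$, giving $24$ cells with point part $Z$.
\end{itemize}
These total $76+72+24+24=196$. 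Within a block, equality of products reduces to equality of translation parts $v_i+M_iv_j$. Since every $M_i$ is diagonal, this is a coordinatewise computation in $\half\Z^3$.

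Next, I compute the translation vector $2(v_i+M_iv_j)\in\Z^3$ for every cell directly from the table, and bucket the cells by this vector within each block. For blocks (c) and (d) the computation is especially transparent. Left-multiplying by $y^{\pm2}$ adds $(0,\pm1,0)$, and right-multiplying by $y^{\pm2}$ adds $M(0,\pm1,0)=(0,\mp1,0)$ for $M=X$ or $Z$. So $a\,y^{2}=y^{-2}a$ for every $a$ in these fibers. Each cell $(a,y^{2})$ is thus paired with the cell $(y^{-2},a)$, and symmetrically $(a,y^{-2})$ with $(y^{2},a)$, which gives classes of size at least $2$ immediately. Any further coincidences in these blocks are extra translations by $(0,\pm2,0)$, which I read off from the list of $v$'s. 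The four $I\cdot I$ cells give $y^4$, $y^{-4}$, and the identity twice, so they must be merged with the $X\cdot X$ and $Z\cdot Z$ buckets in block (a). The identity arises from $a\cdot a^{-1}$-type cells whenever the inverse of an element also lies in the set, and $A$ is visibly closed under inversion. I will tabulate these bucket sizes explicitly.

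Then I count the classes, checking that the total is $71$, that the minimum class size is $2$ and the maximum is $10$, and that the class sizes sum to $196$. The partial-permutation assertion needs no computation: it is Proposition~\ref{prop:pp} applied to the distinct $a_i$. The tally can also be sanity-checked against the reported solver-free verifier.

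The main obstacle is purely bookkeeping: blocks (a) and (b) contain $76$ and $72$ cells, and the class count $71$ depends on getting every coincidence right. To control this, I will exploit the symmetries of $A$. First, $A$ is closed under inversion, so the anti-automorphism $(i,j)\mapsto(j^{-1},i^{-1})$ permutes the classes. Second, the ball involution $x\leftrightarrow y$ composed with conjugation may permute $A$. Together these cut the independent computations roughly in half, and I will verify the remainder by exact integer arithmetic rather than by hand.
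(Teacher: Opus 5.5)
Your method is the paper's: its proof is a direct exact computation of the $196$ products, and it also checks the partial-permutation property over all $14^4$ quadruples. You instead cite Proposition~\ref{prop:pp}, which is legitimate because the fourteen pairs $(M,v)$ are distinct. Your split by the point part of the product is correct and makes the count checkable by hand:
\begin{itemize}
\item blocks (c) and (d) have no extra $(0,\pm2,0)$ coincidences and give $24$ classes of size $2$;
\item block (a) gives class sizes $10,5^6,2^{18}$;
\item block (b) gives class sizes $8,6^3,4^5,2^{13}$.
\end{itemize}
Together these are exactly the paper's multiset $10,8,6^3,5^6,4^5,2^{55}$, with $71$ classes.

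One factual claim is false, however, and the labour-saving step built on it fails: $A$ is \emph{not} closed under inversion.
\begin{itemize}
\item For example, $(xy^2)^{-1}=x^{-1}y^{2}=(X,(-\tfrac12,-\tfrac12,0))$, which is not in the $X$-fiber of Table~\ref{tab:witness}. Note that $x^{-1}y^{-2}=(X,(-\tfrac12,\tfrac32,0))$ is a different element.
\item Likewise, the inverses of $x^{-1}y^{-2}$, $y^{-1}x$ and $x^{-1}y^{-1}$ lie outside $A$.
\item Only five inverse pairs lie in $A$: $\{x,x^{-1}\}$, $\{yxy,yx^{-1}y\}$, $\{xy,y^{-1}x^{-1}\}$, $\{xy^{-1},yx^{-1}\}$, $\{y^2,y^{-2}\}$.
\end{itemize}
The statement you are proving already forces this. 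The identity class consists exactly of the cells $(a,a^{-1})$ with $a,a^{-1}\in A$. Inversion-closure would give it $14$ cells, exceeding the claimed maximum $10$; in fact it is precisely the size-$10$ class. It would also contradict Proposition~\ref{prop:count}, where $\iota$ acts freely.

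Hence $(a,b)\mapsto(b^{-1},a^{-1})$ does not map $A\times A$ to itself and cannot be used to halve the bookkeeping. Your second proposed symmetry is also unavailable. Conjugation and inversion fix every fiber, since $\K$ is abelian, while $\tau$ swaps the $X$- and $Y$-fibers. So $\tau$ composed with any conjugation sends $A$ to a set with distribution $(2,0,6,6)$, never to $A$ itself.

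Drop both reductions and compute all $148$ cells of blocks (a) and (b) directly; this is entirely feasible by hand. With that change your argument is complete.
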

\begin{proof}
Direct exact computation; the class-size multiset is
$10,8,6^{3},5^{6},4^{5},2^{55}$ (verified, and consistent with
Proposition~\ref{prop:pp}, checked over all $14^4$ quadruples).
\end{proof}

The coarse shape of the witness --- its distribution across the four fibers --- is
not incidental but forced.

\begin{proposition}[distribution rigidity]\label{prop:rigid}
Of the $680$ point-group distributions $(n_I,n_X,n_Y,n_Z)$ with $n_I+n_X+n_Y+n_Z=14$,
exactly two admit a non-UP $14$-set inside $\Ball3$ --- and the same two inside
$\Ball4$: the distribution $(2,6,0,6)$ and its image $(2,0,6,6)$ under the swap
$\tau\colon x\leftrightarrow y$. In each case the remaining $678$ are impossible:
four are single-fiber (hence UP by Lemma~\ref{lem:fiber}) and the other $674$ are
solver \texttt{INFEASIBLE}. Thus, up to the ball symmetry $\tau$, every minimal
non-UP set of $\P$ inside $\Ball4$ has fiber distribution $(2,6,0,6)$: two elements
in the identity fiber, a $6$--$6$ split across two nontrivial fibers, and the third
nontrivial fiber \emph{empty}.
\end{proposition}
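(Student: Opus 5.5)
The plan is to decompose the statement by point-group distribution and settle each piece with a separate exact computation, as announced in Remark~\ref{rem:sym}. First I enumerate all compositions $(n_I,n_X,n_Y,n_Z)$ of $14$ into four nonnegative parts; there are $\binom{17}{3}=680$ of them. For each distribution $d$ and each radius $r\in\{3,4\}$, the candidate universe is $\Ball{r}$, and the encoding of Section~\ref{sec:model} is augmented by four cardinality constraints $\sum_{g\in\Ball r\cap\pp^{-1}(c)}s_g=n_c$. Because the point-group projection $\pp$ is a function on elements, every $14$-subset of $\Ball r$ has exactly one distribution. The $680$ refined models therefore partition the unrestricted size-$14$ model, and the union of their answers is exhaustive.

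The four single-fiber distributions $(14,0,0,0)$, $(0,14,0,0)$, $(0,0,14,0)$, $(0,0,0,14)$ are removed without computation. A set contained in a single fiber has a unique product by Lemma~\ref{lem:fiber}, which rests on the generic-functional argument. Next I use the ball isometry $\tau$, which fixes $I$ and $Z$ and swaps $X$ and $Y$. It maps a $(n_I,n_X,n_Y,n_Z)$-witness in $\Ball r$ to a $(n_I,n_Y,n_X,n_Z)$-witness in $\Ball r$. So it suffices to solve the distributions with $n_X\le n_Y$, and the rest follow by symmetry. I do not reduce by the full $S_3$, because it does not preserve the balls. On the positive side, the witness of Table~\ref{tab:witness} realizes $(2,6,0,6)$ inside $\Ball3\subseteq\Ball4$, and its image under $\tau$ realizes $(2,0,6,6)$. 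Both are checked non-UP by the independent brute-force routine.

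The remaining $674$ distributions, of which about half are solved directly, are each run to \texttt{INFEASIBLE} in $\Ball3$ and again in $\Ball4$. The result at radius $3$ does not imply the result at radius $4$, since a larger ball has more candidates, so both runs are needed. The final sentence of the proposition then follows: any minimal non-UP set in $\Ball4$ has size $14$ by Theorem~\ref{thm:ball}, so its distribution is one of the two that survive, and it is $(2,6,0,6)$ up to $\tau$.

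I expect the main obstacle to be the $\Ball4$ sweep. With $83$ elements there are $83^2$ product cells, and nearly balanced distributions such as $(3,4,4,3)$ may make proving infeasibility expensive. To keep these runs tractable, I would also exploit conjugation invariance (Remark~\ref{rem:translation}), restricted to conjugators that preserve the ball, to break symmetry. If necessary I would split stubborn distributions further, by fixing the lexicographically least selected element in each fiber. Each such split must be checked to remain an exhaustive case decomposition.
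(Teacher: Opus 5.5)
Your plan is correct and is essentially the paper's proof. Like the paper, you fix the four fiber counts over $\Ball{r}$ for $r=3,4$, discard the four single-fiber distributions by Lemma~\ref{lem:fiber}, take \texttt{INFEASIBLE} verdicts for the other $674$, and verify the $(2,6,0,6)$ witness and its $\tau$-image solver-free; your $\tau$-halving is the reduction the paper endorses in Remark~\ref{rem:sym}. One small correction: since $\Ball3\subseteq\Ball4$, the radius-$4$ infeasibility results already imply the radius-$3$ ones, so only the $\Ball4$ sweep (plus the witness in $\Ball3$) is logically needed, contrary to your ``both runs are needed''; the $\Ball3$ runs serve only as a cross-check.
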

\begin{proof}
For each distribution and each of $r=3,4$ the model is run with the four fiber-counts
fixed, over $\Ball{r}$; the two feasible cases produce sets re-verified non-UP by the
solver-free routine, and every other case is a completed \texttt{INFEASIBLE}
certificate or is discarded by Lemma~\ref{lem:fiber}.
\end{proof}

Rigidity is in fact exact enough to count.

\begin{proposition}[exact count of minimal witnesses]\label{prop:count}
$\P$ has \emph{exactly} $16$ non-UP $14$-sets inside $\Ball3$: eight of distribution
$(2,6,0,6)$ and their eight $\tau$-images of distribution $(2,0,6,6)$. The
ball-isometry group $\langle\tau,\iota\rangle\cong(\Z/2)^2$ generated by the swap
$\tau$ and inversion $\iota\colon g\mapsto g^{-1}$ --- both of which preserve $\Ball3$
and, being an automorphism and an anti-automorphism, send non-UP sets to non-UP sets
--- acts \emph{freely} on these $16$ sets, in exactly $4$ orbits of size $4$. So the
minimal non-UP set of $\P$ is, inside $\Ball3$, one of just four essentially distinct
configurations.
\end{proposition}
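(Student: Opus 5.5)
The count is a computational statement, and I would prove it by exhaustive enumeration that reuses the distribution rigidity of Proposition~\ref{prop:rigid} to keep the search small.

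\textbf{Step 1: restrict the distributions.} By Proposition~\ref{prop:rigid}, any non-UP $14$-set in $\Ball3$ has distribution $(2,6,0,6)$ or $(2,0,6,6)$. Since $\tau$ preserves $\Ball3$ and non-UP-ness and swaps these two distributions, it is a bijection between the two families. So it suffices to count the sets of distribution $(2,6,0,6)$ and double the result.

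\textbf{Step 2: count one distribution exactly.} Fix the fiber counts to $(2,6,0,6)$ over $\Ball3$ and enumerate all solutions of the constraint model. Blocking clauses work: after each solution $S$, add $\bigvee_{g\in S}\lnot s_g$ and re-solve until \texttt{INFEASIBLE}. Alternatively, CP-SAT's solution enumeration can be used. I expect exactly eight solutions. Each returned set would be re-checked non-UP by the solver-free routine.

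The main obstacle is that the completeness of this count rests on the final infeasibility verdict. To make it independent, I would also enumerate the feasible pairs of $I$-fiber elements (the $I$-fiber of $\Ball3$ is small) as an outer loop. For each choice I would solve the residual model, and would further cross-check with a DRAT-certified run of the final blocked instance, as was done for Theorem~\ref{thm:ball}.

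\textbf{Step 3: the group action.} Both maps preserve $\Ball3$. The swap $\tau$ does so by Remark~\ref{rem:sym}. Inversion $\iota$ does so because the generating set is inverse-closed.

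Both maps also preserve non-UP-ness. For the automorphism $\tau$ this is clear. For $\iota$, note that $(A^{-1})(A^{-1})$ has multiplicity $\operatorname{mult}_{A^{-1}A^{-1}}(w^{-1})=\operatorname{mult}_{AA}(w)$ via $(a,b)\mapsto(b^{-1},a^{-1})$. The two maps commute because $\tau(g^{-1})=\tau(g)^{-1}$, so they generate $(\Z/2)^2$ acting on the $16$ sets.

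For freeness, $\tau$ and $\tau\iota$ change the distribution. This holds because $\iota$ preserves point parts, since every point matrix is its own inverse. Hence they fix no set. It remains to check that $\iota A\ne A$ for each of the $16$ sets, a direct finite check. For instance, in Table~\ref{tab:witness} the element $x$ lies in $A$ with $x^{-1}$, but the $Z$-fiber element $xy$ has inverse $y^{-1}x^{-1}$, which is present. So this check is not automatic and must be done by computation on each set.

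A free action of a group of order $4$ on $16$ points gives exactly $4$ orbits of size $4$.
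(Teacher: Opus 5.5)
Your proposal is correct and follows the paper's proof. Like the paper, you take the exhaustive solution list of the fixed-distribution $(2,6,0,6)$ model over $\Ball3$, rely on Proposition~\ref{prop:rigid} (implicitly in the paper) to exclude every other distribution, and double the count via $\tau$.

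The only difference is the direction of the final step. The paper computes the $4$ orbits directly and deduces freeness from $16=4\cdot|\langle\tau,\iota\rangle|$. You instead get freeness of $\tau$ and $\tau\iota$ structurally, since $\iota$ preserves point parts while $\tau$ swaps the two distributions. That leaves only the finite check $\iota A\ne A$, which by commutation of $\tau$ and $\iota$ is needed only on the eight $(2,6,0,6)$ sets, and you then read off the orbit count.
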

\begin{proof}
The eight sets of distribution $(2,6,0,6)$ are the complete solution list of the
fixed-distribution model (status \texttt{OPTIMAL}, enumeration exhausted), each
re-verified non-UP; $\tau$ carries them bijectively to the eight of $(2,0,6,6)$; and
the orbit count under $\langle\tau,\iota\rangle$ is a direct computation on the $16$
sets. Freeness follows from $16=4\cdot|\langle\tau,\iota\rangle|$ with $4$ orbits.
\end{proof}

That the minimal witness \emph{misses} a fiber is worth flagging against the two-sided
optimum of Proposition~\ref{prop:twosided}, whose two sides are instead
\emph{fiber-balanced} $(3,3,3,3)$: passing from one set to two trades a lopsided,
fiber-avoiding shape for an even one.

\begin{lemma}[single fibers are UP]\label{lem:fiber}
A nonempty finite set contained in a single fiber $\pp^{-1}(c)$ has a unique product.
\end{lemma}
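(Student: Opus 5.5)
We prove Lemma~\ref{lem:fiber} by a linear-functional (extremal) argument on the translation parts, using that within a fiber the group law is affine in $v$ with the fixed matrix $c$. Let $A\subseteq\pp^{-1}(c)$ be finite and nonempty, and write each element as $a=(c,v_a)$. For $a,b\in A$ the product is $ab=(c^2,\,v_a+c\,v_b)$. Since $c\in\K$ is an involution, $c^2=I$, so every product lies in the identity fiber. Two products coincide exactly when their translation parts coincide. Hence $A\cdot A$ is non-UP if and only if the multiset $\{v_a+c\,v_b\}_{(a,b)\in A\times A}$ has no element of multiplicity one.

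Next we pick a generic linear functional. The vectors $v_a$ are distinct, because the elements of $A$ share the matrix $c$. Moreover the map $v\mapsto cv$ is an injective linear map, so the vectors $c\,v_b$ are also distinct. Choose $\varphi\colon\R^3\to\R$ linear and injective on the finite set $\{v_a\}\cup\{c\,v_b\}$; such $\varphi$ exist because only finitely many hyperplanes need to be avoided. Let $a^\ast$ maximize $\varphi(v_a)$, and let $b^\ast$ maximize $\varphi(c\,v_b)=(\varphi\circ c)(v_b)$ over $b\in A$. Both maximizers are unique by the choice of $\varphi$.

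Now we check that $a^\ast b^\ast$ is uniquely represented. Suppose $a b=a^\ast b^\ast$ for some $a,b\in A$. Applying $\varphi$ gives $\varphi(v_a)+\varphi(c v_b)=\varphi(v_{a^\ast})+\varphi(c v_{b^\ast})$. Each summand on the left is at most the corresponding maximal summand on the right. So equality forces $\varphi(v_a)=\varphi(v_{a^\ast})$ and $\varphi(cv_b)=\varphi(cv_{b^\ast})$. By injectivity of $\varphi$ on the relevant set, $a=a^\ast$ and $b=b^\ast$. This exhibits a uniquely represented element of $A\cdot A$.

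This is the standard argument that a set of translations in a torsion-free abelian group has a unique product, adapted to the twist by $c$. The only point needing care is that both coordinates are maximized independently, with $b^\ast$ chosen against the twisted functional $\varphi\circ c$, not against $\varphi$ itself. The same argument handles the two-sided version mentioned after Proposition~\ref{prop:twosided}. There $A\subseteq\pp^{-1}(c)$, $B\subseteq\pp^{-1}(d)$ and the products are $(cd,\,v_a+c\,v_b)$, and we maximize $\varphi$ on $A$ and $\varphi\circ c$ on $B$.
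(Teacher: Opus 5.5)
Your proof is correct and uses the same generic-linear-functional argument as the paper. The paper writes the functional on the product as $\langle d,v_a\rangle+\langle cd,v_b\rangle$, which is exactly your $\varphi(v_a)+(\varphi\circ c)(v_b)$; your separate maximization of the two summands, with the explicit genericity condition that $\varphi$ be injective on $\{v_a\}\cup\{c\,v_b\}$, spells out the step the paper states as ``maximized at a unique pair.''
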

\begin{proof}
For $a=(c,v_a)$, $b=(c,v_b)$ in the fiber, $ab=(I,\,v_a+cv_b)$, and for a
direction $d\in\R^3$ generic relative to the finitely many translation parts the
linear functional $(a,b)\mapsto\langle d,\,v_a+cv_b\rangle=\langle d,v_a\rangle
+\langle cd,v_b\rangle$ is maximized at a unique pair; as the functional depends only
on the product, that maximal product is uniquely represented.
\end{proof}

\begin{corollary}\label{cor:spread}
A non-UP set in $\P$ is not contained in any single fiber $\pp^{-1}(c)$; it meets at
least two of the four fibers. (The witness of Table~\ref{tab:witness} meets three,
namely $\pp^{-1}(I),\pp^{-1}(X),\pp^{-1}(Z)$.)
\end{corollary}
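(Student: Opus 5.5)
The corollary follows directly from Lemma~\ref{lem:fiber}, so the plan is a contrapositive plus one finite check. Let $A\subseteq\P$ be non-UP, so $A$ is nonempty and finite by definition. Suppose, for contradiction, that $A\subseteq\pp^{-1}(c)$ for some $c\in\K$. Lemma~\ref{lem:fiber} then gives $A$ a unique product, which contradicts $A$ being non-UP. Hence $A$ is not contained in any single fiber. Since the four fibers $\pp^{-1}(I),\pp^{-1}(X),\pp^{-1}(Y),\pp^{-1}(Z)$ partition $\P$, the nonempty set $A$ meets at least one of them, and therefore meets at least two.

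The only step with real content is the lemma, and it already handles the subtle point. When $c\ne I$, left multiplication does not preserve the fiber, so a product of two fiber elements has translation part $v_a+cv_b$. This is why the lemma uses the functional $\langle d,v_a\rangle+\langle cd,v_b\rangle$. Because $c$ is an invertible sign matrix, $cd$ is generic whenever $d$ is. The maximum is then attained at a unique pair: the unique maximizer $a^*$ of $\langle d,\cdot\rangle$ on the translation parts, together with the unique maximizer $b^*$ of $\langle cd,\cdot\rangle$. Distinct elements of one fiber have distinct translation parts, so these maximizers are unique as group elements. Since the functional depends only on the product $ab$, the product $a^*b^*$ is uniquely represented. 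I would cite this rather than reprove it.

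For the parenthetical claim I would read off the fiber column of Table~\ref{tab:witness}. There are six elements with point part $X$, six with point part $Z$, and the two translations $y^{\pm2}$ with point part $I$. So the witness meets exactly the three fibers $\pp^{-1}(I),\pp^{-1}(X),\pp^{-1}(Z)$ and avoids $\pp^{-1}(Y)$, consistent with its distribution $(2,6,0,6)$. There is no genuine obstacle. The only care needed is to record that the point-part assignments in the table are correct, for instance that $x^{-1}y^{-2}$ and $yxy$ have point part $XY^2=X$ and $YXY=X$ in the abelian group $\K$.
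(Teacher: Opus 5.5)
Your proof is correct and follows the paper's route: the corollary is stated as an immediate consequence of Lemma~\ref{lem:fiber}, since a nonempty set inside one fiber has a unique product and so a non-UP set must meet at least two fibers, and the parenthetical is read off the fiber column of Table~\ref{tab:witness}. Your further remarks, on the genericity of $cd$ and on distinct elements of one fiber having distinct translation parts, just unpack the lemma's proof and are sound.
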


We confirmed Lemma~\ref{lem:fiber} empirically as a guard on the model
($2\cdot 10^4$ random single-fiber subsets, none non-UP). Lemma~\ref{lem:fiber} is
folklore -- it is the standard observation that cosets of an orderable subgroup are
UP, going back to the theory of u.p.\ groups \cite{Strojnowski1980,Passman1977}; we
include it because it is exactly the local statement whose \emph{global} failure is
the subject of the next section.

\begin{remark}[the combinatorics alone has no lower-bound content]\label{rem:cyclic}
The purely combinatorial relaxation -- cover an $n\times n$ grid by partial-permutation
classes each of size $\ge2$ -- is satisfiable for every $n\ge 2$: in $\Z_n$ the set
$\{0,1,\dots,n-1\}$ realizes each product value $n$ times. Thus no lower bound on the
non-UP size can come from the coincidence combinatorics in isolation; ruling out
small $n$ is entirely a statement about torsion-free realizability. This is why
Question~\ref{q:min} is genuinely a question about $\P$, not about patterns.
\end{remark}

\section{Why ordering does not bound the diameter}\label{sec:obstruction}

The proof of Lemma~\ref{lem:fiber} maximized a linear functional and read off a
unique maximal product. One is tempted to run the same argument across all of $\P$:
pick $d$, maximize $\langle d,\tau(ab)\rangle$ over $A\times A$, and conclude the
maximal product is unique. This is exactly the argument that works in bi-orderable
groups, and it fails in $\P$.

\begin{remark}[cocycle obstruction]\label{rem:cocycle}
The translation part is a cocycle, not a homomorphism:
$\tau(ab)=\tau(a)+\pp(a)\,\tau(b)$. Products from different fibers can therefore
coincide as group elements, $a_1b_1=a_2b_2$ with $\pp(a_1)\ne\pp(a_2)$, and the
functional that is maximized within one fiber need not select a unique global
maximizer once fibers interact. Quantitatively: for the $14$-set of
Table~\ref{tab:witness} the $\langle d,\tau(\cdot)\rangle$-maximal product of
$A\cdot A$ is non-unique for every one of $2000$ random directions $d$ tested. More
tellingly, even the maximal product \emph{within the identity fiber}
$\pp^{-1}(I)$ -- where the within-fiber argument would force uniqueness -- is made
non-unique by cross-fiber coincidences: over $3000$ random (set, direction) samples
the maximal identity-fiber product was non-uniquely represented in $30$ cases,
precisely when the maximizing element is simultaneously realized in two fibers.
\end{remark}

Remark~\ref{rem:cocycle} is the precise reason Theorem~\ref{thm:ball} cannot be
upgraded to answer Question~\ref{q:min}: an ordering/convexity argument would bound
the word-radius of a minimal non-UP set and reduce the question to a finite ball
search, but the cocycle defeats every such argument. A diameter bound for minimal
non-UP sets in $\P$, if one exists, must use the affine geometry more globally.

\section{Decidability and a finite-diameter principle}\label{sec:decide}

Question~\ref{q:min} asks about all of $\P$, not a ball, so the searches of
Section~\ref{sec:results} cannot settle it directly. We record here that it is
nonetheless decidable, by a principle that locates the difficulty precisely: minimal
non-UP sets cannot escape to infinity, so the obstruction is the \emph{size} of a
finite search, not its unboundedness.

Encode an $n$-element subset of $\P$ as distinct elements $a_i=(M_i,t_i)$ with
$M_i\in\K$ and $t_i=2v_i\in\Z^3$, subject to the parity constraint
$t_i\equiv\varepsilon(M_i)\pmod 2$, where $\varepsilon(I)=(0,0,0)$,
$\varepsilon(X)=(1,1,0)$, $\varepsilon(Y)=(0,1,1)$, $\varepsilon(Z)=(1,0,1)$ records
the coset of $M_i$ (note $\varepsilon(X)+\varepsilon(Y)\equiv\varepsilon(Z)$). Then
$A$ is non-UP iff
\begin{equation}\label{eq:nonup}
\forall (i,j)\ \exists (k,l)\ne(i,j):\quad M_iM_j=M_kM_l\ \wedge\
t_i+M_it_j=t_k+M_kt_l .
\end{equation}

The substitution $t_i=\varepsilon(M_i)+2s_i$ ($s_i\in\Z^3$) clears the parity
constraint: a coincidence $t_i+M_it_j=t_k+M_kt_l$ becomes the integer equation
\begin{equation}\label{eq:scoord}
s_i+M_is_j-s_k-M_ks_l=c_{ijkl}\in\Z^3,\qquad
2c_{ijkl}=(\varepsilon(M_k){+}M_k\varepsilon(M_l))-(\varepsilon(M_i){+}M_i\varepsilon(M_j)),
\end{equation}
the right side being an integer vector because $M_iM_j=M_kM_l$.

\begin{lemma}[coordinate decoupling]\label{lem:decouple}
Because every $M\in\K$ is a diagonal sign matrix, system \eqref{eq:scoord} splits
into three independent integer systems, one per coordinate $r\in\{1,2,3\}$, the
$r$-th involving only the scalars $s_1^{(r)},\dots,s_n^{(r)}$. Each such system has
coefficient matrix with entries in $\{-2,-1,0,1,2\}$ and at most four nonzero entries
per row. The decoupling reflects the realization of $\P$ as a subgroup of
$D_\infty^{\,3}$ (three infinite dihedral groups), which goes back to
Promislow \cite{Promislow1988} and is used in the recent computational study
\cite{DLNV2026}; we use only its consequence that a non-UP pattern is realized by
gluing three one-dimensional integer realizations along the common point-part
assignment.
\end{lemma}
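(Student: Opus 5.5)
The plan is to make the decoupling explicit at the level of \eqref{eq:scoord} and then read off the two structural claims: the coefficient bounds, and the interpretation via $D_\infty^{\,3}$. Write each $M\in\K$ as $\mathrm{diag}(\mu^{(1)}(M),\mu^{(2)}(M),\mu^{(3)}(M))$ with $\mu^{(r)}(M)\in\{\pm1\}$. Since the $M_i$ are diagonal, the $r$-th coordinate of $M_is_j$ is $\mu^{(r)}(M_i)\,s_j^{(r)}$, which involves only the $r$-th coordinate of $s_j$. Taking the $r$-th coordinate of \eqref{eq:scoord} therefore gives the scalar equation
\[
s_i^{(r)}+\mu^{(r)}(M_i)\,s_j^{(r)}-s_k^{(r)}-\mu^{(r)}(M_k)\,s_l^{(r)}=c_{ijkl}^{(r)},
\]
and this equation contains no unknown from any other coordinate. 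The point-part assignment $i\mapsto M_i$ is fixed in advance, and the chosen coincidence pairs $(i,j)\sim(k,l)$ are fixed data of the pattern. So once these are fixed, the full system for $(s_1,\dots,s_n)\in(\Z^3)^n$ is the direct product of three systems in $(s^{(r)}_1,\dots,s^{(r)}_n)\in\Z^n$. The solution set is then the product of the three one-coordinate solution sets, and it is nonempty exactly when all three are.

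Next I would bound the coefficients. In a single row the unknowns $s_i,s_j,s_k,s_l$ carry coefficients $+1$, $\pm1$, $-1$, $\mp1$. The indices need not be distinct, so coefficients on a repeated index add. That leaves at most four distinct unknowns per row, each with a coefficient whose absolute value is at most $2$, giving entries in $\{-2,\dots,2\}$. I would note the degenerate collapses. The case $i=k$ forces $j=l$ by Proposition~\ref{prop:pp}, so it does not arise for a genuine coincidence. Cases such as $i=j$ or $i=l$ produce the entries $1\pm1\in\{0,2\}$ or $1-\mu$. I would also check that $c_{ijkl}^{(r)}$ is an integer: coordinatewise, $2c^{(r)}$ is a difference of terms $\varepsilon^{(r)}(M)\pm\varepsilon^{(r)}(M')$, and the parity identity $\varepsilon(M_iM_j)\equiv\varepsilon(M_i)+\varepsilon(M_j)$ together with $M_iM_j=M_kM_l$ makes it even.

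Finally, for the $D_\infty^{\,3}$ interpretation, I would take the $r$-th coordinate projection $(M,v)\mapsto(\mu^{(r)}(M),v^{(r)})$, which lands in the affine isometries $x\mapsto\pm x+t$ of $\R$ with $t\in\tfrac12\Z$, a copy of $D_\infty$. This projection is a homomorphism because diagonal multiplication acts coordinatewise. The product of the three projections is injective, since $(M,v)$ is recovered from its coordinates, so it embeds $\P$ in $D_\infty^{\,3}$. A coincidence $a_ia_j=a_ka_l$ holds in $\P$ exactly when it holds in each factor, which is the "gluing" statement. Distinctness of the $a_i$ does not decouple, because $a_i\ne a_k$ only requires some coordinate to differ. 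I would flag that this is the one constraint that stays coupled. I expect no real obstacle here: the only care needed is the bookkeeping of repeated indices in the coefficient bound.
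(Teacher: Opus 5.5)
Your proposal is correct and is essentially the paper's proof: read off the $r$-th coordinate of \eqref{eq:scoord} using that each $M\in\K$ is diagonal, merge coefficients on repeated indices to get at most four nonzero entries in $\{-2,\dots,2\}$, and observe that $(M,v)\mapsto\bigl((M)_{rr},v_r\bigr)_{r=1,2,3}$ is an injective homomorphism into $D_\infty^{\,3}$. Your integrality check for $c_{ijkl}$ and your remark that distinctness is the one condition that does not decouple are both sound additions.

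One small bookkeeping point, which the paper also leaves implicit: your case list covers only pairwise collapses. For a triple collapse such as $i=j=l$ or $j=k=l$, three unit coefficients could a priori sum to $\pm3$. What keeps the merged entry at $+1$ (resp.\ $-1$) is the row constraint $M_iM_j=M_kM_l$, which in these cases forces $M_k=M_i$ (resp.\ $M_i=M_j=M_k$).
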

\begin{proof}
For coordinate $r$, the $r$-th component of \eqref{eq:scoord} is
$s_i^{(r)}+(M_i)_{rr}s_j^{(r)}-s_k^{(r)}-(M_k)_{rr}s_l^{(r)}=c_{ijkl}^{(r)}$ with
$(M)_{rr}\in\{\pm1\}$; it involves only $r$-th components. The four terms have unit
coefficients, merging to entries of absolute value $\le2$ when indices coincide;
collecting like terms leaves at most four nonzeros. The map
$(M,v)\mapsto((M)_{11},v_1),((M)_{22},v_2),((M)_{33},v_3))$ into
$(\{\pm1\}\ltimes\tfrac12\Z)^3=D_\infty^{\,3}$ is the induced injective homomorphism.
\end{proof}

\begin{theorem}[effective finite-diameter principle for $\P$]\label{thm:decide}
There is an explicit constant $D(n)\le 4^{\,n}\,\mathrm{poly}(n)$ such that, if $\P$
contains a non-UP set of size $n$, then it contains one whose elements all lie in the
word-ball $\Ball{D(n)}$. Hence the existence of a non-UP $n$-set in $\P$ is decidable,
and Question~\ref{q:min} is decided by the search of Theorem~\ref{thm:ball} carried to
radius $D(13)$.
\end{theorem}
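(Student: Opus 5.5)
Suppose $A=\{a_1,\dots,a_n\}$ is a non-UP set in $\P$. The plan is to fix its combinatorial data and re-realize that data by a small integer solution. The data are the point parts $M_i\in\K$ and, for each cell $(i,j)$, a chosen partner $(k,l)\ne(i,j)$ with $a_ia_j=a_ka_l$. By \eqref{eq:nonup} and \eqref{eq:scoord}, $A$ is then a solution $s=(s_1,\dots,s_n)\in(\Z^3)^n$ of the affine system $\mathcal{E}$ consisting of the $n^2$ partner equations \eqref{eq:scoord}. It is subject to the distinctness side condition $a_i\ne a_j$ for $i\ne j$. This matters only when $M_i=M_j$, where it reads $s_i\ne s_j$. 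Conversely, any integer solution of $\mathcal{E}$ satisfying distinctness defines, via $t_i=\varepsilon(M_i)+2s_i$, an $n$-set in $\P$ that is non-UP: every cell keeps its partner, and the partners are distinct cells because the pattern fixes the index pairs. So it suffices to show that a system with a solution also has one of small sup-norm, and then to convert sup-norm into word length.

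By Lemma~\ref{lem:decouple}, $\mathcal{E}$ splits into three independent scalar systems $\mathcal{E}^{(r)}$ in the unknowns $s^{(r)}\in\Z^n$. Each has coefficients in $\{-2,\dots,2\}$, at most four nonzeros per row, and right-hand sides $|c^{(r)}_{ijkl}|\le 2$. The distinctness condition is harder, since $s_i\ne s_j$ in $\Z^3$ is a disjunction over coordinates. I would handle it by guessing, for each pair $i<j$ with $M_i=M_j$, a coordinate $r(i,j)$ in which they differ, together with a sign. This turns distinctness into strict inequalities $s^{(r)}_i-s^{(r)}_j\ge 1$ (or $\le -1$) attached to a single coordinate system. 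This gives finitely many guesses, and the original $A$ certifies that at least one guess is feasible. For that guess, coordinate $r$ becomes an integer program: $\mathcal{E}^{(r)}$ (equalities) together with at most $\binom n2$ difference inequalities, in $n$ variables. Since the inhomogeneous lattice $\{s^{(r)}\in\Z^n:\mathcal{E}^{(r)}\}$ is nonempty, I would parametrize it as $s^{(r)}=p+Lz$, with $p$ a particular solution and $L$ a basis of the integer kernel, both of size bounded via Hadamard's inequality. Minors of an $\le n\times n$ matrix with $\le 4$ entries of size $\le2$ per row are at most $\prod(\text{row norms})\le 4^{n}$, which should be the origin of the $4^n$. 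The polyhedron $\{z: \text{inequalities}\}$ is then nonempty in the integers, and the standard small-solution theorem for integer programs (Papadimitriou / Borosh--Treybig type: an integer point exists with entries bounded by $n\cdot$(max subdeterminant)$\cdot$poly) yields a solution with $\|s^{(r)}\|_\infty\le 4^n\mathrm{poly}(n)$.

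Gluing the three coordinates gives $A'$ with $\|t_i\|_\infty\le 4^n\mathrm{poly}(n)$. The last step is to convert this to word length. The translations $x^2,y^2,(xy)^2$ give $\ell((I,u))\le 2\|u\|_1$ for $u\in\Z^3$, and each coset has a representative of length $\le 2$. Hence $\ell(a)\le 2+6\|t\|_\infty+O(1)$, and $D(n)\le 4^n\mathrm{poly}(n)$. Decidability follows because $D(n)$ is computable and the ball search is finite.

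The main obstacle I expect is making the subdeterminant bound honest. The first option is to apply the integer-programming bound directly to the mixed system (equalities plus the $\pm1$ difference inequalities), whose rows still have $\le4$ entries of size $\le2$. This keeps all minors $\le 4^n$ by Hadamard and avoids the lattice-basis detour, and I would try it first. A secondary subtlety is that equalities among cells not prescribed by the pattern are harmless (extra coincidences only help non-UP-ness), so no inequations beyond distinctness are needed.
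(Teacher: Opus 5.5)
Your proposal is correct and shares the paper's skeleton: fix point parts and a partner for each cell, pass to \eqref{eq:scoord}, decouple via Lemma~\ref{lem:decouple}, bound minors by Hadamard, apply a small-solution theorem, and convert $\ell^\infty$-size into word length. The genuine difference is how you handle distinctness.

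The paper solves only the equality systems $A^{(r)}s^{(r)}=b^{(r)}$. It then asserts that the conditions $a_i\ne a_j$ ``delete finitely many proper sublattices'' and can be avoided by boundedly many further lattice steps. This needs only the equation form of the size bound and no extra case split, but it is a sketch. Making it precise requires a short basis of the integer kernel and an avoidance (grid or polynomial) argument. It also requires care with the fact that $s_i\ne s_j$ couples the three coordinates, so the excluded set is not a sublattice of any single coordinate system.

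Your guess of a coordinate and a sign for each same-fiber pair is exactly what restores the decoupling. Each disjunction becomes one inequality $\pm(s^{(r)}_i-s^{(r)}_j)\ge1$ in one coordinate system, and the original set certifies that some guess is feasible. The polyhedral form of the bound in the same \S17 of Schrijver that the paper cites then finishes: there is an integral point with $\|x\|_\infty\le(n+1)\Delta$, where $\Delta$ is the largest subdeterminant of $[A\ b]$. Hadamard still gives $\Delta\le4^{n+1}$, since every row of $[A\ b]$ has Euclidean norm at most $4$. The at most $6^{\binom n2}$ extra cases enlarge the enumeration but not $D(n)$. In exchange, the distinctness step becomes a complete, checkable argument.

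You are right to apply the bound directly to the mixed system rather than in the $p+Lz$ coordinates. There the inequality rows inherit entries of size up to $4^n$ from $L$, and the resulting bound would be exponential in $n^2$, not $4^n\mathrm{poly}(n)$.

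One harmless slip: $(I,(0,0,\pm1))=(xy)^{\mp2}$ has word length $4$, since identity-fiber elements have even length and the only nontrivial ones of length $2$ are $x^{\pm2},y^{\pm2}$. So $\ell((I,u))\le 2|u_1|+2|u_2|+4|u_3|$ rather than $2\|u\|_1$; only constants change.
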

\begin{proof}
Fix a point-part assignment $(M_1,\dots,M_n)\in\K^n$ ($4^n$ choices). The inner
disjunction in \eqref{eq:nonup} ranges over the cells $(k,l)$ with $M_kM_l=M_iM_j$, so
selecting one disjunct per cell --- a \emph{matching}, at most $(n^2)^{n^2}$ of them
--- turns \eqref{eq:nonup} into the integer system \eqref{eq:scoord}, which by
Lemma~\ref{lem:decouple} is three independent systems $A^{(r)}s^{(r)}=b^{(r)}$ with
$A^{(r)}$ an $(\le n^2)\times n$ integer matrix, entries in $\{-2,\dots,2\}$ and at
most four nonzeros per row, and $\|b^{(r)}\|_\infty\le1$. Each row of $A^{(r)}$ thus
has Euclidean norm $\le4$, so by Hadamard's inequality every $k\times k$ minor is at
most $4^k\le4^n$. By the standard size bounds for solutions of integer linear systems
(e.g.\ \cite[\S17]{Schrijver1986}), if $A^{(r)}s^{(r)}=b^{(r)}$ is solvable over $\Z$
it has a solution with $\|s^{(r)}\|_\infty\le 4^{n}\,\mathrm{poly}(n)$; the
distinctness conditions $a_i\ne a_j$ delete finitely many proper sublattices and are
avoided at the cost of boundedly many further lattice steps. The resulting small
solution realizes the same point-part assignment and matching, hence a non-UP
$n$-set whose translation parts have $\ell^\infty$-norm at most
$4^n\,\mathrm{poly}(n)$; since an element $(M,\varepsilon+2s)$ has word norm
$O(1+\lVert s\rVert_1)$ (the squares $x^2,y^2,(xy)^2$ step through the lattice at
bounded cost), the set lies in $\Ball{D(n)}$ with $D(n)$ dominating this bound over
the finitely many assignments and matchings. Note that no translation of a witness
is invoked (Remark~\ref{rem:translation} forbids it); the small witness is
\emph{re-realized} from the pattern, not moved. Decidability is then the search of
$\Ball{D(n)}$.
\end{proof}

\begin{remark}[two-sided version]\label{rem:twosided-decide}
The argument applies verbatim to the two-sided minimum of
Section~\ref{sec:twosided}: a coincidence $a b=a'b'$ with $a,a'\in A$, $b,b'\in B$ is
the same linear equation $t_a+M_a t_b=t_{a'}+M_{a'}t_{b'}$ in the translation parts,
so the systems decouple coordinatewise exactly as in Lemma~\ref{lem:decouple}, and
for each total $m$ the existence of a two-sided witness with $|A|+|B|=m$ in $\P$ is
decidable with the same effective bound (with $m$ in place of $n$). In particular
$m_2(\P)$ is, in principle, computable.
\end{remark}

Theorem~\ref{thm:decide} is the $\P$-specialization of the realizability viewpoint of
Nielsen--Soelberg \cite{NielsenSoelberg2025} --- the matchings play the role of their
relation sets, and ``$\P$-realizable'' replaces ``torsion-free realizable'' --- made
\emph{effective} via the coordinate decoupling. It certifies that ball search
eventually decides Question~\ref{q:min}, so the lower bounds of
Theorem~\ref{thm:ball} are genuine progress toward a terminating procedure. Its limits
are equally concrete, and leave two routes open.

\emph{(A) The bound $D(n)$ is far from tight, and the true obstruction is
distinctness, not the pattern.} For the size-$14$ pattern of Table~\ref{tab:witness}
the explicit bound gives $D(14)\lesssim 4^{14}\approx 2.7\times10^8$, whereas the
true minimal realization has word-radius $3$ (Theorem~\ref{thm:ball}). The gap is
structural. Each row of the coordinate matrix $A^{(r)}$ is a signed sum of at most
four standard basis vectors --- a signed $4$-uniform incidence matrix. We tested
$7000$ random consistent coincidence systems ($1000$ for each $8\le n\le 14$, built
from random point-part assignments and random pattern-type matchings and kept when
solvable over $\mathbb Q$; distinctness of the realizing elements is \emph{not}
imposed), together with the witness pattern and a targeted sweep of the
identity-poor, balanced point-group distributions. Two features are uniform across
all of them, and they are what the conjecture needs: the primitive nullspace
generators have $\ell^\infty$-norm exactly $1$, and $A^{(r)}s=b^{(r)}$ admits a
particular solution of $\ell^\infty$-norm exactly $1$. Thus the solution lattice is
generated by unit vectors and every pattern compresses into a \emph{unit} coordinate
box, so the coincidence \emph{pattern} imposes no diameter at all. (The lattice is
not, however, near-unimodular: while the nonzero Smith invariant factors stay at $1$
or $2$ for the generic distributions, they reach $4$ on the identity-poor
distributions $(n_I,n_X,n_Y,n_Z)=(0,3,3,2)$ and $(1,2,2,3)$ --- verified by two
independent Smith-normal-form computations --- so it is the short-generator, not the
unimodularity, structure that is doing the work.) What then forces a minimal non-UP set to have any diameter at all is
the requirement that its $n$ elements be \emph{distinct}: $n$ distinct elements of
$\P$ need a ball of volume $\gtrsim n$, i.e.\ word-radius $\gtrsim n^{1/3}$. The
witness saturates this: $14$ elements in radius $3$. This is exactly the regime where
the cocycle obstruction (Section~\ref{sec:obstruction}) prevents an ordering proof,
yet all evidence points one way.

\begin{conjecture}\label{conj:poly}
$D(n)=O(\mathrm{poly}(n))$; concretely, if $\P$ contains a non-UP $n$-set then it
contains one of word-radius $O(n^{1/3})$. In particular $D(n)\le 6$ for
$8\le n\le 13$.
\end{conjecture}

\noindent The conditional payoff is sharp: \emph{if $D(n)\le 6$ for $8\le n\le13$}
--- consistent with
the short-lattice evidence above --- then Theorem~\ref{thm:ball}, which already shows
$\Ball6$ contains no non-UP set of size $8$--$13$, would prove that $\P$ has no
non-UP set of size below $14$ at all, i.e.\ that Promislow's $14$ is the minimum
non-UP cardinality in $\P$. Resolving Question~\ref{q:min} thus reduces to proving
effective bounds $D(n)\le 6$ for $8\le n\le 13$ (or any searchable value), for which
Conjecture~\ref{conj:poly} and the signed-incidence structure are the evidence and,
we hope, the route.

\emph{(B) A $\P$-specific pruning of the matching census.} Reducing the matching
count is the alternative. Here a structural obstacle is intrinsic: the size-$8$
patterns are \emph{not} classified --- Nielsen--Soelberg exhaustively settled $n\le7$
(whence the global minimum $8$) but, by their own account, did not exhaustively search
$n=8$, exhibiting only two realizable examples in groups that are virtually class-$2$
nilpotent, structurally unlike the virtually abelian $\P$. So there is no finite list
to test against $\P$; the single-fiber lemma and the cross-block coverage forced by
Remark~\ref{rem:cocycle} prune matchings, but not enough to bring $n=8,\dots,13$ into
range. We leave both routes open.

\begin{remark}[the practical solver ceiling is radius $6$]\label{rem:ceiling}
The lower bounds of Theorem~\ref{thm:ball} stop at radius $6$ for a concrete reason.
At radius $7$ ($|\Ball7|=363$) the monolithic unanchored model for $n=8$ does not
terminate within a generous budget. Decomposing by $\tau$-reduced point-group
distribution (Remark~\ref{rem:sym}) makes each instance tractable: the hardest
cases -- the balanced, identity-poor distributions such as
$(n_I,n_X,n_Y,n_Z)=(0,2,2,4),(0,2,3,3),(0,2,4,2),(0,3,3,2),(1,2,2,3)$ -- each resolve
to \texttt{INFEASIBLE}, but only after several minutes apiece, so certifying the
whole radius-$7$ ball this way is a matter of solver time rather than principle. We
report radius $6$ as the \emph{certified} rigorous bound (no non-UP set of size
$8$--$13$ is contained in $\Ball6$); the radius-$7$ evidence is strong but the full ball
is not yet exhausted. Either way, Theorem~\ref{thm:decide} shows that pushing the
radius alone cannot be decisive without the structural input it requires.
\end{remark}

\section{A contrasting case: the Fibonacci group $H_4$}\label{sec:h4}
The same question receives a sharply different answer in a neighbouring group. Let
$H_4=F(3,4)$ be the Fibonacci group on four generators. Dietrich, Lee, Nies and
Vinyals \cite{DLNV2026} prove (their Section~7) that $H_4$ fails the unique product
property, exhibiting a \emph{two-sided} witness: finite sets $A,B$ with $|A|=29$ and
$|B|=27$ inside the radius-$3$ ball, found by a satisfiability search and verified in
\textsf{GAP}. We ask instead the \emph{symmetric, single-set} question that organizes
this note --- how small can a set $A$ with $A\cdot A$ non-unique be?

We realize $H_4$ inside the index-$2$ Heisenberg extension
$\mathrm{Heis}(\Z)\rtimes\langle\tau\rangle$, with $\tau$ acting by
$(a,b,c)\mapsto(-a,-b,c)$ and $\tau^2$ a fixed central element $w$: an exact search
produces four elements satisfying the Fibonacci relations of $F(3,4)$, each squaring
to $w$, hence a homomorphism $\varphi$ from $F(3,4)$ onto the subgroup they generate.
$\varphi$ is injective: by \cite{DLNV2026} (Section~4) $H_4$ is torsion-free and
virtually the integral Heisenberg group, so it is polycyclic of Hirsch length $3$;
the image is verified to contain a finite-index copy of $\mathrm{Heis}(\Z)$, so it
too has Hirsch length $3$; additivity of the Hirsch length forces the kernel to have
Hirsch length $0$, i.e.\ to be finite, and a finite normal subgroup of a torsion-free
group is trivial. (As independent sanity checks, the image is torsion-free on
$\Ball5$ and its parity/central structure matches \cite{DLNV2026}.) Exact CP-SAT ball searches in the $\{x_i^{\pm1}\}$ word metric, under the
same infeasibility-certificate discipline used for $\P$, resolve the symmetric
question completely on balls of radius up to $4$.

\begin{proposition}\label{prop:h4sym}
$H_4$ fails the unique product property \emph{symmetrically}: there is a
$16$-element set $A\subset H_4$ of word-radius $3$ with $A\cdot A$ non-UP. Moreover
$16$ is the least symmetric size over the radius-$4$ ball: sizes $8,\dots,14$ are
\texttt{INFEASIBLE} in $\Ball4$ ($283$ elements), and minimizing $|A|$ over
$\Ball4$ (and over $\Ball3$) returns the optimum $16$ with a matching lower bound,
so size $15$ is impossible there as well. The radius-$2$ ball contains no symmetric
non-UP set of any size.
\end{proposition}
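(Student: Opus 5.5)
The proof will be computational, in the same way as Theorem~\ref{thm:ball}, and it rests on the exact realization of $H_4$ just described. I first fix the faithful model: four elements $x_1,\dots,x_4$ of $\mathrm{Heis}(\Z)\rtimes\langle\tau\rangle$, stored as exact integer tuples together with a $\tau$-bit. Their faithfulness is already argued through the Hirsch-length count. I then build the word-balls $\Ball{r}$ for $r\le4$ in the $\{x_i^{\pm1}\}$ metric by breadth-first search on exact normal forms, which gives $|\Ball4|=283$. From this data I tabulate the full multiplication table $U\times U\to H_4$ for $U=\Ball3$ and $U=\Ball4$, keeping products as exact group elements rather than reducing them into the ball.

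The \emph{existence} half comes next. I run the unanchored constraint encoding of Section~\ref{sec:model} over $\Ball3$ at $n=16$ and read off a set $A$. I then re-verify non-UP-ness with the solver-free routine: form all $256$ products, bucket them by exact value, and check that every bucket has size $\ge2$. Each element of $A$ has a word of length $\le3$, and at least one element has length exactly $3$. Because the radius-$2$ claim below excludes any witness in $\Ball2$, the word-radius of $A$ is exactly $3$. Since $\varphi$ is injective, the verified non-UP set in the image pulls back to $H_4$, so $H_4$ fails the UPP symmetrically.

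For the \emph{lower bounds}, I run the unanchored model at every $n=8,\dots,14$ over $\Ball4$ to \texttt{INFEASIBLE}, using $8$ as the Nielsen--Soelberg floor. For sizes $2$--$7$, either that theorem suffices or I rerun the model directly. Separately, I pose the optimization ``minimize $\sum s_g$ subject to non-UP and $\sum s_g\ge1$'' over $\Ball3$ and over $\Ball4$. An \texttt{OPTIMAL} status with value $16$ certifies that $15$ is impossible as well. For extra safety I also run a standalone $n=15$ feasibility check and produce DRAT certificates checked by \textsf{drat-trim}, as the reproducibility paragraph promises. For $\Ball2$, I run every $n$ up to $|\Ball2|$ to infeasibility; Lemma-type small cases can also be enumerated outright.

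I expect the main obstacle to be solver time on $\Ball4$, near $n=14,15$. The monolithic instances there may stall, as happened for $\P$ at radius $7$ (Remark~\ref{rem:ceiling}). My first remedy is to decompose by the image in the finite quotient $H_4/\mathrm{Heis}(\Z)\cong\Z/2$ together with the parity classes. I would use an analogue of Lemma~\ref{lem:fiber} to discard single-coset distributions, since a set inside one coset of the orderable subgroup $\mathrm{Heis}(\Z)$ is UP, and I would exploit any ball isometries, such as the cyclic shift $x_i\mapsto x_{i+1}$ and inversion, to break symmetry before certifying each piece.
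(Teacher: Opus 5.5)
Your proposal is correct and takes essentially the paper's route: an exact faithful model of $H_4$, a CP-SAT search over $\Ball{3}$ whose $16$-element witness is re-verified solver-free, \texttt{INFEASIBLE} certificates for sizes $8$--$14$ over $\Ball{4}$ with the Nielsen--Soelberg floor below $8$, a minimization whose optimum $16$ excludes size $15$, and a full infeasibility sweep of $\Ball{2}$, which, as you note, also pins the witness's word-radius at exactly $3$. Your fallback of discarding single-coset splits is also sound, because the index-$2$ subgroup is normal and torsion-free nilpotent, hence UP: for $A\subseteq g\,\mathrm{Heis}(\Z)$ one has $aa'=g^2(g^{-1}h_ag)h_{a'}$, so $A\cdot A$ inherits a unique product, whereas in its split analysis the paper decides even the single-coset splits with the solver.
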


The witness is parity-balanced ($8+8$ across the two $\mathrm{Heis}(\Z)$-cosets), is
not closed under inversion, and its $256$-cell product grid splits into $103$
coincidence classes (sizes $6^3,5^2,4^{16},2^{82}$), each a partial permutation; it
was found by CP-SAT minimization and re-verified by the solver-free checker. Its
coarse shape is again forced, and again in a way that mirrors $\P$: of the
\emph{seventeen} ways to split $16$ elements between the two
$\mathrm{Heis}(\Z)$-cosets, only the perfectly balanced $(8,8)$ admits a non-UP
$16$-set in $\Ball3$ --- each of the other sixteen splits, from $(0,16)$ through
$(16,0)$, is separately \texttt{INFEASIBLE}. (Each split must indeed be decided
separately: by Remark~\ref{rem:translation} multiplication by the odd generator
does not carry witnesses to witnesses, so no symmetry halves the list, and parity
is preserved by inversion.) Moreover the census is exact, and lands on a striking
numerical coincidence with Proposition~\ref{prop:count}: within $\Ball3$ the group
$H_4$ has \emph{exactly $16$} minimal symmetric non-UP sets ($16$-sets), all of
split $(8,8)$, forming $8$ orbits under inversion --- just as $\P$ has exactly
$16$ minimal ($14$-element) witnesses there. We see no structural reason for the
matching counts and record it as a curiosity. So where the minimal witness of
$\P$ is forced to be lopsided and to miss a fiber (Proposition~\ref{prop:rigid}),
that of $H_4$ is forced to be evenly balanced across its index-$2$ structure ---
the same one-set/two-set tension seen for $\P$, now visible already inside the
single-set witness. Sizes below $8$ are impossible in any torsion-free group
\cite{NielsenSoelberg2025}, so the $\Ball3$/$\Ball4$ statements are complete. (Whether the value drops below $16$ over
larger balls is beyond our current solver budget --- the radius-$5$ ball has $579$
elements and its sizes $8$--$15$ are undecided; over all of $H_4$ the symmetric
minimum lies in $[8,16]$.) Symmetric failure is formally
stronger than the two-sided failure proved in \cite{DLNV2026}: we know of no general
construction turning a two-sided witness into a symmetric one --- the classical
symmetrization arguments produce pairs, not single sets (Strojnowski's proof that
u.p.\ and t.u.p.\ coincide \cite{Strojnowski1980} passes from a near-uniqueness pair
$(A,B)$ to the new \emph{pair} $(B^{-1}A,\,BA^{-1})$) --- and we are not aware of a
reference deciding whether every group failing the UPP admits a symmetric witness.
Proposition~\ref{prop:h4sym} settles this for $H_4$ by computation, sharpening
``$H_4$ is not a UP group'' to its single-set form.

On the two-sided side, minimizing $|A|+|B|$ over pairs drawn from balls of $H_4$
sharpens the witness of \cite{DLNV2026} considerably.

\begin{proposition}\label{prop:h4twosided}
The two-sided minimum over the radius-$3$ ball of $H_4$ is exactly $22$: the ball
contains sets $A,B$ with $|A|=8$, $|B|=14$, and $A\cdot B$ non-UP, and no pair with
$|A|+|B|\le 21$. In particular $m_2(H_4)\le 22$. The radius-$2$ ball contains no
two-sided witness at all.
\end{proposition}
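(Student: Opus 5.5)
The argument is computational and follows the same certificate discipline as Proposition~\ref{prop:twosided}: one solver-free positive check at total $22$, and solver infeasibility for every smaller total. I would first fix the exact model of $H_4$: four elements of $\mathrm{Heis}(\Z)\rtimes\langle\tau\rangle$ satisfying the $F(3,4)$ relations, with $\varphi$ injective by the Hirsch-length argument above. I would then enumerate $\Ball2$ and $\Ball3$ in the $\{x_i^{\pm1}\}$ word metric, storing elements as exact integer tuples so products are compared exactly.

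The two-sided model uses selectors $s_g$ and $t_h$ for $g,h\in\Ball3$. Ordered pairs $(g,h)$ are grouped by the exact product $gh$. For each product value $w$, the number of selected pairs with $s_g\wedge t_h$ must lie in $\{0\}\cup\{\ge2\}$. A value realized by a single pair becomes the clause $\lnot s_g\vee\lnot t_h$; otherwise I would introduce an indicator for each pair and for the value. I would add $\sum s\ge1$ and $\sum t\ge1$ and minimize $\sum s+\sum t$.

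The upper bound comes from the solver's witness with $|A|=8$ and $|B|=14$. I would re-check it with the brute-force routine, which computes all $112$ products and confirms that every multiplicity is at least $2$. That check proves $m_2(H_4)\le22$ with no trust in the solver.

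The lower bound is the main obstacle. The monolithic optimization may stall, so I would split it, as the paper describes for $\P$ in Proposition~\ref{prop:twosided}. For each total $16\le N\le21$ and each split $(|A|,|B|)=(m,N-m)$ with $m\le N/2$, I would run a separate feasibility instance with both cardinalities fixed. The cases with $m>N/2$ are covered by the inversion anti-automorphism $(A,B)\mapsto(B^{-1},A^{-1})$, since balls are inversion-closed. Totals below $16$ are excluded for all torsion-free groups by \cite{NielsenSoelberg2025}, and $H_4$ is torsion-free. Each split at $N\le21$ should return \texttt{INFEASIBLE}. For robustness I would also try to discharge the hardest splits with DRAT-checked SAT runs.

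Finally, for $\Ball2$ I would run the same model with no target total. An \texttt{INFEASIBLE} verdict there, for all sizes, gives the last sentence of the statement.
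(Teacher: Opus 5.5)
Your proposal is correct and follows the paper's proof: a solver-free check of the printed $(8,14)$ witness, the Nielsen--Soelberg bound for totals $\le 15$, \texttt{INFEASIBLE} verdicts for totals $16$--$21$, and the inversion anti-automorphism $(A,B)\mapsto(B^{-1},A^{-1})$ to restrict attention to splits with $|A|\le|B|$. The differences are minor: the paper decides totals $16$--$20$ in single runs and splits by side sizes only at total $21$, and it rules out $|A|=1$ by left cancellation instead of a solver call.
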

\begin{proof}
The witness is printed below; totals $\le 15$ are impossible in any torsion-free
group \cite{NielsenSoelberg2025}, totals $16$--$20$ are \texttt{INFEASIBLE} directly,
and total $21$ is \texttt{INFEASIBLE} for every side split $(|A|,|B|)$ with
$2\le|A|\le 10$ --- which suffices: $|A|=1$ is impossible since left translation
makes all products of a singleton side uniquely represented, and the splits with
$|A|>|B|$ reduce to these via $(A,B)\mapsto(B^{-1},A^{-1})$, which preserves both the
ball (inversion preserves word length) and the non-UP property, since the
multiplicity of $w$ in $B^{-1}\!\cdot\!A^{-1}$ equals that of $w^{-1}$ in $A\cdot B$.
\end{proof}

The pair was found by CP-SAT minimization and re-verified by the solver-free
two-sided checker; it is parity-balanced (each side splits evenly between the two
$\mathrm{Heis}(\Z)$-cosets), the sides share $3$ elements, neither $A\cdot A$ nor
$B\cdot B$ is non-UP, and the $8\times14$ product grid splits into $48$ coincidence
classes (sizes $4,3^{14},2^{33}$), each a partial permutation. This improves the
$|A|+|B|=56$ witness of \cite{DLNV2026} to $22$, within the same radius-$3$ ball, and
$22$ is optimal there; the value $|A|=8$ matches the smallest side size that can
appear in any torsion-free group \cite{NielsenSoelberg2025}. Globally
$16\le m_2(H_4)\le 22$, the lower bound again being \cite{NielsenSoelberg2025}.

\subsection{Profile and staircase of \texorpdfstring{$H_4$}{H4}}
The two curves of Sections~\ref{sec:profile}--\ref{sec:staircase} were computed
for $H_4$ over the same radius-$3$ ball ($119$ elements), and they differ from
$\P$'s in instructive ways.

\begin{proposition}[$H_4$ profile in $\Ball3$]\label{prop:h4profile}
For $m\le7$ the model is \texttt{INFEASIBLE} (no pair with $|A|=m$, regardless of
$|B|$). At larger $m$ the profile is completely determined:
\[
\beta(8)=14,\quad \beta(9)=\infty,\quad \beta(10)=20,\quad \beta(11)=\infty,\quad
\beta(12)=14,\quad \beta(13)=16,\quad \beta(14)=8,\quad \beta(15)=12,\quad
\beta(16)=8,
\]
every finite value an \texttt{OPTIMAL} certificate with verified witness and the
two $\infty$'s \texttt{INFEASIBLE} certificates; $\beta(14)=8$ also follows from
the inversion image of the $(8,14)$ optimum together with the total bound
$|A|+|B|\ge22$ of Proposition~\ref{prop:h4twosided}.
\end{proposition}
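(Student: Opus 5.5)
The profile values are exact finite computations over the fixed $119$-element universe $\Ball3$ of $H_4$, so I would reuse the two-sided CP-SAT model of Proposition~\ref{prop:h4twosided}. Take two families of selector Booleans $s_g$ (for $A$) and $s'_h$ (for $B$) over $\Ball3$. For each product value $w$, add the constraint that the number of selected pairs $(g,h)$ with $gh=w$ lies in $\{0\}\cup\{\ge2\}$. Then fix $\sum_g s_g=m$ and minimize $\sum_h s'_h$. Products are computed exactly in the faithful Heisenberg model of $H_4$ described above, whose faithfulness is justified by the Hirsch-length argument.

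The steps, in order, are as follows.

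First, for each $m\le7$, I would drop the objective, leave $|B|$ free (so $B$ may be the whole ball), and obtain an \texttt{INFEASIBLE} verdict. For $m=1$ no solver is needed: left translation by the single element of $A$ is injective, so every product is uniquely represented.

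Second, for $m=8,10,12,13,15,16$, I would run the minimization to status \texttt{OPTIMAL}. Each returned pair would be re-checked by the solver-free two-sided verifier. The optimality part of each certificate is the infeasibility of every smaller $|B|$; as was done for the $\beta(15)$ spike in $\P$, I would also re-run the adjacent values $|B|=\beta(m)-1$ individually as feasibility instances.

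Third, for $m=9$ and $m=11$, I would prove \texttt{INFEASIBLE} with $|B|$ unconstrained, exactly as in the first step.

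Fourth, $\beta(14)=8$ needs no separate search, and serves as a consistency check. The inversion $(A,B)\mapsto(B^{-1},A^{-1})$ preserves $\Ball3$ and two-sided non-UP-ness. Applied to the $(8,14)$ optimum of Proposition~\ref{prop:h4twosided}, it gives a $(14,8)$ pair, so $\beta(14)\le8$. Conversely, $|A|+|B|\ge22$ forces $|B|\ge8$ when $|A|=14$. The same symmetry cross-checks the other entries: for instance, $\beta(16)=8$ together with $\beta(8)=14$ and the total bound $22$ is consistent, and no value may contradict a realizable transposed shape such as $(8,16)$.

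The main obstacle is the two $\infty$ entries. With $|B|$ free, the solver must rule out every $B\subseteq\Ball3$ at once, and the holes at $m=9,11$ lie between realizable neighbours, so no cheap bound prunes the search. My first attempt would be to decompose by the coset split of $A$ and $B$ between the two $\mathrm{Heis}(\Z)$-cosets: ten splits of $A$ for $m=9$ and twelve for $m=11$, with the splits of $B$ left free or enumerated. Each sub-instance would be run to \texttt{INFEASIBLE}, mirroring the split-by-split method used for total $21$. If that stalls, I would further fix $|B|$ over $8\le|B|\le119$, using $|A|+|B|\ge22$ to discard small $|B|$, and exploit that very large $B$ tends to create forced unique extremal products early in propagation.
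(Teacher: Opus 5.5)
Your plan follows the paper's justification. Every entry is a CP-SAT verdict over the $119$-element ball: finite values are \texttt{OPTIMAL} certificates with a solver-free re-check of the witness, and the entries $m\le7$, $m=9$, $m=11$ are \texttt{INFEASIBLE} with $|B|$ unconstrained. The value $\beta(14)=8$ comes from inverting the $(8,14)$ optimum together with the total bound $|A|+|B|\ge22$; the paper also ran that case directly, but your derivation suffices. Your coset-split decomposition for the two holes is an optional implementation device, not something the statement depends on.

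There is one concrete defect in the model as you wrote it: nothing forces $B\neq\emptyset$. If all $s'_h=0$, every product count is $0$, so every ``$\{0\}\cup\{\ge2\}$'' constraint holds vacuously. Your minimization would then return $|B|=0$ for every $m$. Likewise, the ``$|B|$ free'' runs for $m\le7$ and $m=9,11$ would be satisfiable rather than \texttt{INFEASIBLE}. You must add $\sum_h s'_h\ge1$, since a two-sided witness has nonempty sides. You may as well impose $\sum_h s'_h\ge2$, because a singleton $B$ is excluded by the right-translation analogue of your $m=1$ argument. You could even impose $\sum_h s'_h\ge 22-m$ using Proposition~\ref{prop:h4twosided}. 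With that constraint added, the proposal is correct.

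A small bonus: the lower bound $\beta(14)\ge8$ also follows from your first step alone, without Proposition~\ref{prop:h4twosided}. A $(14,b)$ pair with $b\le7$ inverts to a $(b,14)$ pair inside $\Ball3$, which the $m\le7$ infeasibility already excludes.
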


Where $\P$'s profile is a sharp cliff (nothing below $12$, then essentially flat),
$H_4$'s is \emph{perforated}: the left size $8$ is realizable at its optimum, yet
$|A|=9$ and $|A|=11$ are outright impossible in the ball --- even with $B$ allowed
to exhaust all $119$ elements --- while $|A|=10$ returns at the cost of a much
larger partner ($\beta(10)=20$, total $30$ against the optimal $22$). Realizability
is thus not monotone in either group, but for different reasons: $\P$ shows an
isolated spike ($\beta(15)=15$), $H_4$ shows holes. Parity balance, which pins the
optima --- the witnesses at $m=8$ and $m=10$ are exactly balanced on both sides,
$(4{,}4)/(7{,}7)$ and $(5{,}5)/(10{,}10)$ --- is \emph{not} a law of the landscape:
the witnesses found at $m=13$ and $m=14$ have unbalanced left sides ($(7,6)$ and
$(8,6)$). Balance in $H_4$, like fiber balance in $\P$, is a feature of optimal
witnesses, not of all witnesses.

The staircase of $H_4$ is flat where $\P$'s bumps:
\[
u(n)=2\quad(2\le n\le15),\qquad u(16)=0 ,
\]
all values \texttt{OPTIMAL} over $\Ball3$. As in $\P$, the value $1$ is never
attained --- the two-unique-products and unique-product properties fail
simultaneously, here at $n=16$ with a direct jump $2\to0$ --- but the approach is
featureless: no analogue of $\P$'s bump $u(10)=u(11)=4$ appears. The minimal
symmetric witness of $H_4$ arrives, so to speak, unannounced.

The contrast with $\P$ is the point, and it now runs in both directions. On the
symmetric invariant the groups are separated by exact ball-limited values:
$m_1(\P)=14$ against $m_1(H_4)=16$, both realized at word-radius $3$ and both
certified minimal over the radius-$4$ ball (radius $6$ for $\P$). On the two-sided
invariant the order \emph{reverses}: $m_2(H_4)=22$ over $\Ball3$ against
$m_2(\P)=24$ over $\Ball3$. So $\P$ fails the UPP more efficiently with one set, and
$H_4$ more efficiently with two --- the two invariants are genuinely independent
measurements, and neither reduces to the other. The finer curves sharpen the
contrast: $\P$ forces balance and shows a bump before failure; $H_4$ permits
lopsidedness, its profile has holes, and its staircase is flat. We do not know
whether this opposite ordering persists for the minima over the full groups; the
ball-limited statements leave that, and the global values themselves, open.

\section{An asymmetry invariant}\label{sec:delta}
The opposite ordering is organized by a single derived quantity. A symmetric witness
$A$ ($|A|=m_1$, $A\cdot A$ non-UP) is in particular the two-sided pair $(A,A)$ of total
size $2m_1$, so
\begin{equation}\label{eq:m2le2m1}
 m_2(G)\ \le\ 2\,m_1(G)
\end{equation}
in every group (and over every ball, since the pair lives where $A$ does). We call the
deficit
\[
 \delta(G)\ =\ 2\,m_1(G)-m_2(G)\ \ge\ 0
\]
the \emph{asymmetry gap}: it measures how much more efficiently a group fails the UPP
with two \emph{different} sets than with one repeated set. Over the radius-$3$ ball the
three groups for which both minima are known give
\[
\renewcommand{\arraystretch}{1.15}
\begin{array}{lccc}
 & m_1 & m_2 & \delta\\\hline
\text{N--S extremal group} & 8 & 16 & 0\\
\P & 14 & 24 & 4\\
H_4 & 16 & 22 & 10
\end{array}
\]
(The first row is exact and global: the size-$8$ symmetric example of
\cite{NielsenSoelberg2025} is optimal, $A=B$ realizes $m_2=16$, and $m_2\ge16$ is their
theorem, so $\delta=0$; the other two rows are the ball-limited values of
Sections~\ref{sec:results}--\ref{sec:h4}.) The quantitative landscape of the
two extremal groups themselves --- localization, census, and staircases at
the global minimum --- is the subject of the sequel \cite{Tabei2026b}. Thus $\delta=0$ exactly at the global
optimum, where symmetry is free, and grows as a group is forced into asymmetry: $\P$'s
gap is small, $H_4$'s is larger, and it is precisely $\delta(H_4)>\delta(\P)$ that
inverts the two orderings ($m_1(\P)<m_1(H_4)$ but $m_2(\P)>m_2(H_4)$). We do not know
the range of $\delta$ over torsion-free groups, whether it can be made arbitrarily
large, or what structural feature it tracks; these seem to us natural questions raised
by the two examples here.

As a sanity check on the mechanism, the parent of $H_4$ points the other way: the
integral Heisenberg group $\mathrm{Heis}(\Z)$, of which $H_4$ is the index-$2$
extension, is finitely generated torsion-free nilpotent, hence bi-orderable, hence a
unique product group --- and indeed our search finds no non-UP set of any size
$8\le n\le 14$ in its radius-$4$ ball. The failure of the UPP in $H_4$ is thus created
entirely by the order-reversing generator $\tau$, not inherited from $\mathrm{Heis}(\Z)$.

\section*{Code and data availability}
The group arithmetic, the CP-SAT model, and an independent (solver-free) non-UP
verifier are implemented in exact integer arithmetic; the source, the witness of
Table~\ref{tab:witness}, and the infeasibility logs underlying Table~\ref{tab:sweep}
and Remark~\ref{rem:ceiling} are archived with the author and available on request (they will also accompany the arXiv submission as ancillary files). Each \textbf{yes}
entry is a printed set checked non-UP by the independent verifier; each
\textsf{none} entry is a solver \texttt{INFEASIBLE} certificate. The reported
computations were run on a single $8$-core workstation. The $H_4$ model of Section~\ref{sec:h4}, its
faithfulness checks (relations, central squares, torsion-freeness, Hirsch length),
and the ball-search logs are archived alongside, as are the two-sided optimum of
Proposition~\ref{prop:twosided} (solver optimality certificate plus the printed pair,
re-verified by the solver-free two-sided checker), the corresponding $H_4$
two-sided searches, and the symmetric $16$-element $H_4$ witness of
Proposition~\ref{prop:h4sym} with its minimization certificates. The
translation-sensitivity certificate of Remark~\ref{rem:translation} (exhaustive
check of all $41$ left- and right-translates of the witness over $\Ball3$,
conjugation sanity checks, and the pairwise translate check on the $16$ minimal
witnesses) is archived as \texttt{translation\_sensitivity}. The profile of
Proposition~\ref{prop:profile} and the staircase of
Proposition~\ref{prop:staircase} (per-$m$ and per-$n$ solver logs, each witness
printed and re-verified solver-free, \texttt{INFEASIBLE} statuses per value) are
archived as \texttt{profile\_*} and \texttt{staircase\_*}.

\section*{Acknowledgements}
We thank Andr\'e Nies for helpful correspondence on the $H_4$ witness and on
minimal non-UP sizes. Computations used Google's CP-SAT solver
\cite{ortools}; all certificates are exact.

\bibliographystyle{amsplain}
\bibliography{refs}

\end{document}